\theoremstyle{plain}
\newtheorem{theorem}{Theorem}
\newtheorem{corollary}[theorem]{Corollary}
\newtheorem{proposition}[theorem]{Proposition}
\newtheorem{lemma}[theorem]{Lemma}
\theoremstyle{definition}
\theoremstyle{remark}
\numberwithin{equation}{section} 
\numberwithin{equation}{section}
\newcommand{\bbR}{\mathbb{R}}
\newcommand{\bbC}{\mathbb{C}}
\newcommand{\bbD}{\mathbb{D}}
\newcommand{\bbN}{\mathbb{N}}
\newcommand{\bbT}{\mathbb{T}}
\newcommand{\mcg}{\mathcal{G}}
\newcommand{\mcgn}{\mathcal{G}^{(n)}}
\newcommand{\mcl}{\mathcal{L}}
\newcommand{\mcm}{\mathcal{M}}
\newcommand{\mcs}{\mathcal{S}}
\newcommand{\mct}{\mathcal{T}}
\DeclareMathOperator{\rank}{\textrm{rank}}
\title{On foci of ellipses inscribed in cyclic polygons}
\author[M.~Hunziker]{Markus Hunziker}
\address[MH]{Department of Mathematics, Baylor University, Waco TX, USA}
\email{Markus\_Hunziker@baylor.edu}
\author[A. Mart\'{\i}nez-Finkelshtein]{Andrei Mart\'{\i}nez-Finkelshtein}
\address[AMF]{Department of Mathematics, Baylor University, Waco TX, USA, and Department of Mathematics, University of Almer\'{\i}a, Almer\'{\i}a, Spain}
\email{A\_Martinez-Finkelshtein@baylor.edu}
\author[T.~Poe]{Taylor Poe}
\address[TP]{Department of Mathematics, Baylor University, Waco TX, USA}
\email{Taylor\_Thompson8@baylor.edu}
\author[B.~Simanek]{Brian Simanek}
\address[BS]{Department of Mathematics, Baylor University, Waco TX, USA}
\email{Brian\_Simanek@baylor.edu}
\keywords{Orthogonal polynomials, Poncelet Ellipses, Blaschke products }
\subjclass[2010]{Primary: 30J10, 42C05;  Secondary: 14N15, 14H50, 47A12}
\begin{document}

\begin{abstract} 
	Given a natural number $n\geq3$ and two points $a$ and $b$ in the unit disk $\bbD$ in the complex plane, it is known that there exists a unique elliptical disk having $a$ and $b$ as foci that can also be realized as the intersection of a collection of convex cyclic $n$-gons whose vertices fill the whole unit circle $\bbT$.  What is less clear is how to find a convenient formula or expression for such an elliptical disk.  Our main results reveal how orthogonal polynomials on the unit circle provide a useful tool for finding such a formula for some values of $n$.  The main idea is to realize the elliptical disk as the numerical range of a matrix and the problem reduces to finding the eigenvalues of that matrix.
\end{abstract}

\maketitle

\section{Introduction}\label{intro}

Suppose $n\geq3$ is a natural number and $E$ is an ellipse in the open unit disk $\bbD$ in the complex plane.  A classical result known as \textit{Poncelet's Theorem} asserts that if there is an $n$-gon $P$ inscribed in the unit circle $\bbT$ with every side of $P$ tangent to $E$, then there are in fact infinitely many such $n$-gons and the union of the vertices of these $n$-gons fills $\bbT$.  In this case, the ellipse $E$ is said to be a \textit{Poncelet $n$-ellipse}.  A simple argument shows that if $a,b\in\bbD$ and $n\geq3$ is a natural number, then there exists a unique Poncelet $n$-ellipse with foci at $a$ and $b$.  However, if $n>3$, then it is not obvious how to write down a formula for this ellipse or deduce any properties of its size (such as area, eccentricity, etc.).  Some early relevant formulas for this purpose were found by Cayley \cite[Chapter 5]{DrRa}, but they are not easy formulas to use.  Some of the main results in this paper will show how to write an explicit expression for Poncelet $n$-ellipses when $n=4$ or $n=6$.

\begin{figure}[htb]
	\begin{center}
	\includegraphics[width=0.35\linewidth]{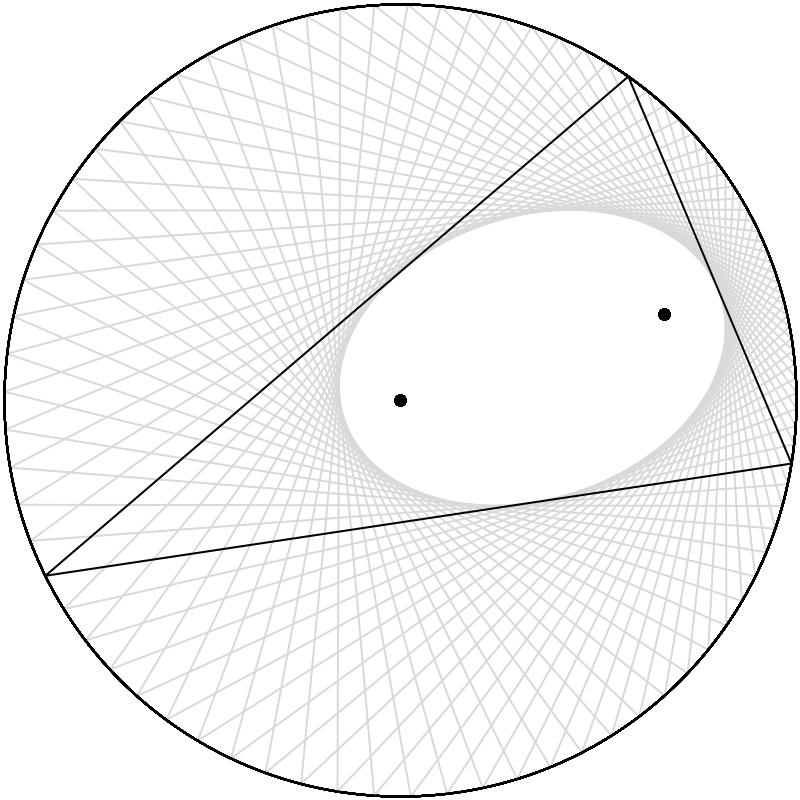}
	\caption{A Poncelet 3-ellipse and its two foci.}
	\end{center}
	\label{fig:PonceletEllpse}
\end{figure}

To accomplish this task, we must frame this problem in a broader context.  The main tool we will use is numerical ranges of a special class of $(n-1)\times (n-1)$ matrices called completely non-unitary contractions of defect index $1$ (denoted $S_{n-1}$).  The specifics of these objects are provided in Section \ref{sec:background} below, but for now it is enough for us to say that the numerical range $W(A)$ of a matrix $A\in S_{n-1}$ is a strictly convex subset of the unit disk with a smooth boundary (see \cite{MFSS}).  Furthermore, the boundary of this set $\partial W(A)$ has the Poncelet property, meaning that every point on $\bbT$ is the vertex of an $n$-gon that is inscribed in $\bbT$ having every side tangent to $\partial W(A)$ and every point on $\partial W(A)$ is a point of tangency for such an $n$-gon (see \cite{DGV}).  Our approach to the problem described in the previous paragraph has its roots in work of Gau and Wu \cite{GW98} and aims to realize the desired ellipse as the numerical range of an appropriate matrix in $S_{n-1}$.  Theorem \ref{Shukla} below assures us that this problem has a solution.

One simplification of our problem comes from the fact that instead of finding a matrix in $S_{n-1}$ with the desired properties, it suffices to find just the eigenvalues of that matrix.  This is because numerical ranges are preserved by unitary conjugation and every matrix in $S_{n-1}$ is unitarily equivalent to a canonical form called a cutoff CMV matrix (see \cite{HMFPS,MFSS}).    Such matrices are an important part of the theory of orthogonal polynomials on the unit circle and will play an essential role in our analysis.  Further details are presented in Section \ref{sec:background}.

Returning to our original problem, notice that Theorem \ref{Shukla} states that both $a$ and $b$ must be eigenvalues of the desired matrix.  Thus, one really only needs to determine the remaining $n-3$ eigenvalues, which is why the problem becomes trivial when $n=3$.  When $n=4$, a concise formula for the one remaining eigenvalue appears in \cite{GW}, and we give another proof of that formula in Section \ref{sec:quadrilateral}.  In \cite{Mirman1}, Mirman presented a collection of algebraic relationships that must be satisfied by the eigenvalues we seek.  While this finding is significant, it falls short of presenting a complete solution to our problem because the algebraic relationships admit multiple solutions (even in $\bbD^{n-3}$).  In Section \ref{sec:hexagon}, we will present a different collection of algebraic relationships that applies in the case $n=6$ and admits a unique solution in $\bbD^{3}$, which means that the unique solution to this system is the collection of $3$ eigenvalues that we seek.  In Section \ref{sec:pentagon}, we will examine some additional properties of all of the solutions to Mirman's system of equations in the case $n=5$.

The next section is a brief review of the background, notation, and terminology that will be relevant to the remainder of the paper.  Many of these topics are discussed in much greater detail in \cite{GBook,HMFPS}, and a thorough introduction to orthogonal polynomials on the unit circle can be found in \cite{OPUC1,OPUC2}.

% % % % % % % % % % % % % % % % % % % % % % % % % % % % % % % % % % % % % % % % % % % % % % % % % % % % % % % % % % % % % % 
\section{Background $\&$ Notation} \label{sec:background}
% % % % % % % % % % % % % % % % % % % % % % % % % % % % % % % % % % % % % % % % % % % % % % % % % % % % % % % % % % % % % % 

Our primary objects of study will be numerical ranges of matrices.  The \emph{numerical range} of  a matrix $A\in \bbC^{n\times n}$ is the subset of the complex plane $\bbC$ given by
$$
W(A)=\{ \langle x,Ax\rangle:\, x\in \bbC^n, \; \|x\|=1 \}.
$$
For any matrix $A$, the set $W(A)$ is a compact and convex subset of $\bbC$ (a fact known as the Toeplitz-Hausdorff Theorem) that contains the eigenvalues of $A$.  If $W(A)$ is bounded by an ellipse, then we will say that $W(A)$ is an \textit{elliptical disk}.

The matrices $A$ that we are most interested in have the following properties:
\begin{enumerate}
\item[\rm{(i)}] $\| A\|=1$;
\item[\rm{(ii)}] all eigenvalues of $A$ are in $\bbD$;
\item[\rm{(iii)}]
$
\rank(I-AA^*)=\rank(I-A^*A)=1.
$
\end{enumerate}
The set of all $n\times n$ matrices satisfying properties (i-iii) is precisely the set $S_n$ that we described in the Introduction.  As we stated there, it is known that the numerical range of a matrix in $S_n$ is the convex hull of an algebraic curve of class $n$ and has the $(n+1)$-Poncelet property, meaning that every point on $\partial W(A)$ is a point of tangency for a convex $(n+1)$-gon that is circumscribed about $W(A)$ and inscribed in $\bbT$ (see \cite{HMFPS} for details).  

There are several canonical forms of matrices from the class $S_n$ (see \cite[Section 2.3]{HMFPS}) and the one that we will use is that of a cutoff CMV matrix.  To define a CMV matrix, first define a sequence of $2\times2$ matrices $\{\Theta_j\}_{j=0}^{\infty}$ by
\[
\Theta_j=\begin{pmatrix}
\bar{\alpha}_j & \sqrt{1-|\alpha_j|^2}\\
\sqrt{1-|\alpha_j|^2} & -\alpha_j
\end{pmatrix},
\]
where $\alpha_j\in\bbD$.  One then defines the operators $\mcl$ and $\mcm$ by
\[
\mcl=\Theta_0\oplus\Theta_2\oplus\Theta_4\oplus\cdots,\qquad\mcm=1\oplus\Theta_1\oplus\Theta_3\oplus\cdots
\]
where the initial $1$ in the definition of $\mcm$ is a $1\times1$ identity matrix.  The \textit{CMV matrix} corresponding to the sequence $\{\alpha_n\}_{n=0}^{\infty}$ is
\begin{equation} \label{cmvInf}
\bm{\mathcal G} :=\mcl\mcm=\begin{pmatrix}
\overline{\alpha_0} & \overline{\alpha_1} \rho_0 & \rho_1 \rho_0  & 0 & 0 & \dots \\
\rho_0 &- \overline{\alpha_1} \alpha_0 & -\rho_1 \alpha_0  & 0 & 0 & \dots \\
0 & \overline{\alpha_2} \rho_1 & - \overline{\alpha_2} \alpha_1   &  \overline{\alpha_3} \rho_2  & \rho_3 \rho_2 & \dots \\
0 & \rho_2 \rho_1 & - \rho_2 \alpha_1   &  -\overline{\alpha_3} \alpha_2  &- \rho_3 \alpha_2 & \dots \\
0 & 0 & 0  &   \overline{\alpha_4} \rho_3  & - \overline{\alpha_4} \alpha_3 & \dots \\
\dots & \dots & \dots  &   \dots  & \dots & \dots
\end{pmatrix}, \quad \rho_n=\sqrt{1-|\alpha_n|^2}
\end{equation}
(see \cite[Section 4.2]{OPUC1}).  Since each of $\mcl$ and $\mcm$ is a direct sum of unitary matrices, each of $\mcl$ and $\mcm$ is unitary and hence $\mcg$ is unitary as an operator on $\ell^2(\bbN)$.  The principal $n\times n$ submatrix of $\mcg$ will also be called the $n\times n$ \textit{cut-off CMV matrix}, which we will denote by $\mcg^{(n)}$.  It is easy to see that $\mcg^{(n)}\in S_n$.

CMV matrices are intimately connected with the theory of orthogonal polynomials on the unit circle (OPUC).  Indeed, if one defines
\begin{equation}
\label{characteristicequ}
\Phi_{n}(z):=\det(zI_n-\mcg^{(n)}),
\end{equation}
then the polynomial $\Phi_n(z)$ is the degree $n$ monic orthogonal polynomial with respect to the measure $\mu$ that is the spectral measure of $\mcg$ and the vector $\vec{e}_1$.  
%If we need to make explicit reference to the measure $\mu$, then we will write $\Phi_{n,\mu}$.  
Since $\mcg$ is unitary, the formula \eqref{characteristicequ} implies that all zeros of $\Phi_n$ are in $\bbD$.  Furthermore, the coefficients $\{\alpha_n\}_{n=0}^{\infty}$ that are used to define $\mcg$ are related to $\{\Phi_n\}_{n=0}^{\infty}$ by the Szeg\H{o} recursion:
\begin{equation} \label{szego}
\begin{pmatrix}
\Phi_{k+1}(z)\\ \Phi_{k+1}^*(z)
\end{pmatrix}= \begin{pmatrix}
z & -\overline{\alpha_k} \\ -\alpha_k z & 1
\end{pmatrix} \,
\begin{pmatrix}
\Phi_{k}(z)\\ \Phi_{k}^*(z)
\end{pmatrix}  , 
\end{equation}
where if
\[
\Phi_n(z)=\sum_{j=0}^n c_j z^j,
\]
then
\begin{equation} \label{reversed}
\Phi^*_n(z)=\sum_{j=0}^n \overline{c_j} z^{n-j}=z^n \, \overline{\Phi_n\left( 1/\overline{z} \right)}.
\end{equation}
$\Phi_n^*$ is called the reversed polynomial of $\Phi_n $.  Observe that $\Phi^*_n$ can be of degree strictly less than $n$.  It follows from the Szeg\H{o} recursion that $\alpha_n=- \overline{\Phi_{n+1}(0)}$ and the sequence $\{\alpha_n\}_{n=0}^{\infty}$ is often called the sequence of \textit{Verblunsky coefficients} for the measure $\mu$.  For future use, let us define the notation
\[
\Phi_{n}(z)=\mcs_{\alpha_{n-1}}(\Phi_{n-1}(z)),\qquad \Phi_{n}^*(z)=\mct_{\alpha_{n-1}}(\Phi_{n-1}^*(z)).
\]
to say that $\Phi_n$ is related to $\Phi_{n-1}$ by the Szeg\H{o} recursion and the parameter $\alpha_{n-1}$.  

Some of the most important theorems in the study of OPUC come from establishing the following bijections (see \cite[Chapter 1]{OPUC1}):
\begin{enumerate}
\item[$\bullet$]  All of the zeros of $\Phi_{n,\mu}(z)$ are in $\bbD$ and any collection $\{z_j\}_{j=1}^n\in\bbD^n$ is the zero set of $\Phi_{n,\nu}(z)$ for some $\nu$ supported on $\bbT$.
\item[$\bullet$]    The sequence $\{\Phi_{n,\mu}(0)\}_{n\in\bbN}$ is a sequence in $\bbD$ and every sequence $\{\gamma_j\}_{j\in\bbN}\in\bbD^{\bbN}$ satisfies $\Phi_{n,\nu}(0)=\gamma_n$ for all $n\in\bbN$ and some measure $\nu$ supported on $\bbT$.
\end{enumerate}
This last fact (known as Verblunsky's Theorem \cite[Section 1.7]{OPUC1}) tells us that the sequence $\{\alpha_n\}_{n=0}^{\infty}$ completely characterizes the measure $\mu$ on $\bbT$.  
If it is necessary to make the sequence of Verblunsky coefficients explicit, we will write $\Phi_n(z;\alpha_0,\ldots,\alpha_{n-1})$.  We also note here that the Szeg\H{o} recursion is invertible.  This means that if we know $\Phi_n(z;\alpha_0,\ldots,\alpha_{n-1})$, then we can recover $\Phi_j(z;\alpha_0,\ldots,\alpha_{j-1})$ for all $j<n$ and hence we can recover $\alpha_j$ for all $j=0,\ldots,n-1$ (by evaluation at $0$).

Given a family of OPUC, replacing the last Verblunsky coefficient $\alpha_{n-1}$ in the Szeg\H{o} recursion with $\lambda \in \bbT$ yields a degree-$n$ \textit{paraorthogonal polynomial} on the unit circle (POPUC)
$$
\Phi_n(z;\alpha_0,...,\alpha_{n-2},\lambda)=z \Phi_{n-1}(z;\alpha_0,...,\alpha_{n-2})-\overline{\lambda} \Phi^*_{n-1}(z;\alpha_0,...,\alpha_{n-2}).
$$
In contrast to OPUC, the zeros of POPUC are on $\bbT$. Given $\Phi_{n-1}(z)$, we can define $\{\Phi_n^{(\lambda)}\}$ as the set of all degree-$n$ POPUC for $\Phi_{n-1}$ as $\lambda$ varies around $\bbT$.
We have already noted that the OPUC $\Phi_n$ is the characteristic polynomial of a cut-off CMV matrix $\mcgn$.
Using this parameter $\lambda \in \bbT$, we can characterize a family of rank one unitary dilations of $\mcgn$.
By adding one row and one column to $\mcgn$, we define a unitary $(n+1)\times (n+1)$ matrix whose characteristic polynomial is $\Phi_{n+1}^{(\lambda)}(z)$.
While the numerical range of $\mcgn$ has the $(n+1)$-Poncelet property, the numerical ranges of its unitary dilations are bounded by $(n+1)$-gons inscribed in $\bbT$ and circumscribed around $W(\mcgn)$.  The vertices of these $(n+1)$-gons are the eigenvalues of the matrices, or equivalently, the zeros of the POPUC.  
 
We have already mentioned that the boundary of the numerical range of $\mcgn$ has the $(n+1)$-Poncelet property.  This phenomenon can be reformulated as saying that $\partial W(\mcgn)$ is the envelope of the family of circumscribing $(n+1)$-gons.  The precise definition of an envelope and its relationship to numerical ranges is complicated, so we will restrict our attention only to the most relevant facts and refer the reader to \cite{HMFPS} for details.  The envelope is most easily understood by means of a dual curve.  If the matrix is $\mcgn$, then the dual curve is an algebraic curve of degree exactly $n$ and the dual of that dual is an algebraic curve of class $n$ with multiple components.  The largest component is the boundary of the numerical range of $\mcgn$ and we denote this component by $C_1$ (to be consistent with notation in \cite{HMFPS}).  The other components can be numbered $C_2,\ldots,C_{\lceil n/2\rceil}$ and they too have an interpretation in terms of the $(n+1)$-gons that circumscribe $\partial W(\mcgn)$.
     
To understand this interpretation, let us consider the component $C_2$, which we call the \textit{Pentagram curve} after the pentagram map from \cite{Pentagram}.  For each $(n+1)$-gon $P$ that circumscribes $\partial W(\mcgn)$, let us order the vertices cyclically on $\bbT$ by $P_1,P_2,\ldots,P_{n+1}$.  Consider now the ``polygon" obtained by joining $P_j$ to $P_{j+2}$ for every $j=1,\ldots,n+1$ (where arithmetic is done modulo $n+1$).  The resulting shape will be a non-convex $(n+1)$-gon if $n$ is even and it will be two $(n+1)/2$-gons with interlacing vertices if $n$ is odd.  In either case, one can define the envelope of this collection of polygons and that will be the curve $C_2$.  A similar construction yields the other curves $C_j$.  When $n=6$, we will refer to the curve $C_3$ as the \textit{Brianchon curve} (after Brianchon's Theorem  \cite[Theorem 5.4]{GBook}) and we note that the curve obtained from this procedure can be a single point (see Figures~\ref{fig:BrianchonpointBis} and 5).

It was Darboux who first proved that if the curve $C_1$ is an ellipse, then all of the other curves $C_j$ are ellipses, where we consider a single point to be a degenerate ellipse (see \cite{HMFPS}).  These ellipses all happen to be from the same package (see \cite{Mirman1}).  In the context of our motivating problem from the opening paragraph, if $C_1$ is an ellipse, then the foci of $C_1$ are eigenvalues of $\mcgn$.  Darboux's result implies that the other $C_j$ curves are ellipses, and it turns out that their foci are also eigenvalues of $\mcgn$.  It turns out that this property persists even if $C_1$ is not an ellipse.  More precisely, if any curve $C_j$ is an ellipse, then the foci of that ellipse are eigenvalues of $\mcgn$.  For this reason, finding matrices $\mcgn$ for which some components $C_j$ are ellipses is an interesting problem related to our primary objective, and we will present results of this kind in later sections (see Theorem \ref{newdecomp}, Theorem \ref{pentfix}, and Theorem \ref{brianfix} below).

We will also work with \textit{Blaschke products}
\begin{equation} \label{notation:Blaschke}
B_n(z):= \frac{ \Phi_{n}(z)}{\Phi_{n}^*(z)},
\end{equation}
where
\[
\Phi_{n}(z)= \prod_{j=1}^n \left(z-z_j \right),\qquad\qquad |z_j|<1.
\]
With this notation, we will say that the Blaschke product $B_n(z)$ has degree $n$.  If we need to make the dependence on the zeros explicit, then we will write $B_n(z;z_1,\ldots,z_n)$.  We will say that a Blaschke product $B_n(z)$ is \textit{regular} if $B_n(0)=0$.

Our discussion so far shows that the following sets are in bijection with one another:
\begin{enumerate}
\item[\rm{(i)}]  equivalence classes of matrices in $S_{n-1}$ (where equivalence is defined by unitary conjugation)
\item[\rm{(ii)}]  monic polynomials of degree $n-1$ with all of their zeros in $\bbD$
\item[\rm{(iii)}] regular degree $n$ Blaschke products
\item[\rm{(iv)}]  $\bbD^{n-1}$ (thought of as collections of Verblunsky coefficients)
\end{enumerate}
Much of what we will do in Sections \ref{sec:quadrilateral} and \ref{sec:hexagon} relates properties of the numerical range of a cutoff CMV matrix $\mcg^{(n-1)}$ to properties of the corresponding Blaschke product $z\Phi_{n-1}(z)/\Phi_{n-1}^*(z)$.  The next result will be very helpful in that regard.  It is essentially an OPUC version of a result from \cite{DGSSV}.

\begin{theorem}\label{newdecomp}
Let $n=jk$ and $B_n(z)$ be a regular Blaschke product, i.e. $\displaystyle B_n(z)=\frac{z\Phi_{n-1}(z)}{\Phi_{n-1}^{*}(z)}$. Then $B_n(z)$ can be expressed as a composition of two regular Blaschke products $B_j(B_k(z))$ (with the degree of $B_m$ equal to $m$) if and only if $\Phi_{n-1}(z)$ factors as
\[
\Phi_{n-1}(z)=\Phi_{k-1}(z)\prod_{m=1}^{j-1}\mcs_{\bar{a}_m}(\Phi_{k-1}(z))
\]
for some $\Phi_{k-1}$ having all of its zeros in $\bbD$ and some $\{a_1,\ldots,a_{j-1}\}\in\bbD^{j-1}$.  If this factorization holds, then the zeros of $\Phi_{k-1}$ are the zeros of $B_k(z)/z$ and $\{a_1,\ldots,a_{j-1}\}$ is the zero set of $B_j(z)/z$.
\end{theorem}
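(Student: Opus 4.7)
The plan is to exploit the bijection between regular Blaschke products of degree $n$, written in normalized form $B_n(z)=z\Phi_{n-1}(z)/\Phi_{n-1}^*(z)$, and collections of $n-1$ zeros in $\bbD$ (the zeros of $\Phi_{n-1}$). The key identity driving both directions is that, for any $a\in\bbD$, the equation $B_k(w)=a$ is equivalent to $w\Phi_{k-1}(w)-a\Phi_{k-1}^*(w)=0$, and by the Szeg\H{o} recursion \eqref{szego} the left-hand side is precisely $\mcs_{\bar a}(\Phi_{k-1}(w))$. Since the Verblunsky coefficient $\bar a$ lies in $\bbD$, this is a monic polynomial of degree $k$ with all $k$ of its zeros in $\bbD$.

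For the forward direction, assume $B_n=B_j\circ B_k$ with $B_j$, $B_k$ regular Blaschke products of degrees $j$ and $k$. Writing $B_j(w)=w\prod_{m=1}^{j-1}(w-a_m)/(1-\bar a_m w)$ exposes $B_j^{-1}(0)=\{0,a_1,\dots,a_{j-1}\}$, so the zero set of $B_n$ in $\bbD$ decomposes as
\[
B_n^{-1}(0) \;=\; \{0\}\cup\{\text{zeros of }\Phi_{k-1}\}\cup\bigcup_{m=1}^{j-1}\{\text{zeros of }\mcs_{\bar a_m}(\Phi_{k-1})\}.
\]
Matching monic polynomials of degree $n-1$ with these zeros (counted with multiplicity, excluding the distinguished $0$) gives exactly the claimed factorization $\Phi_{n-1}(z)=\Phi_{k-1}(z)\prod_{m=1}^{j-1}\mcs_{\bar a_m}(\Phi_{k-1}(z))$. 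Conversely, given such a factorization, set $B_k(z):=z\Phi_{k-1}(z)/\Phi_{k-1}^*(z)$ and $B_j(w):=w\prod_{m=1}^{j-1}(w-a_m)/(1-\bar a_m w)$, and compute directly using
\[
B_k(z)-a_m\;=\;\frac{\mcs_{\bar a_m}(\Phi_{k-1}(z))}{\Phi_{k-1}^*(z)},\qquad 1-\bar a_m B_k(z)\;=\;\frac{\mct_{\bar a_m}(\Phi_{k-1}^*(z))}{\Phi_{k-1}^*(z)}
\]
to obtain
\[
B_j(B_k(z))\;=\;\frac{z\,\Phi_{k-1}(z)\prod_m \mcs_{\bar a_m}(\Phi_{k-1}(z))}{\Phi_{k-1}^*(z)\prod_m \mct_{\bar a_m}(\Phi_{k-1}^*(z))}.
\]
The numerator is $z\Phi_{n-1}(z)$ by hypothesis, and the denominator equals $\Phi_{n-1}^*(z)$ once we know that the reverse operation is multiplicative on polynomials of matching nominal degrees and that $(\mcs_{\bar a}P)^*=\mct_{\bar a}(P^*)$; both facts are immediate from \eqref{reversed} and \eqref{szego}.

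The main obstacle is the bookkeeping around nominal versus actual degrees in the definition \eqref{reversed} of $P^*$: when $\Phi_{k-1}(0)=0$ the actual degree of $\Phi_{k-1}^*$ drops below its nominal degree $k-1$, and one must apply the reverse with respect to the nominal degree in order for the identities $(PQ)^*=P^*Q^*$ and $(\mcs_{\bar a}P)^*=\mct_{\bar a}(P^*)$ to hold as stated. Once this convention is fixed, the final claims, that the zeros of $B_k(z)/z$ are the zeros of $\Phi_{k-1}$ and that $\{a_1,\ldots,a_{j-1}\}$ is the zero set of $B_j(z)/z$, are immediate from the explicit formulas for $B_k$ and $B_j$ used in the converse direction.
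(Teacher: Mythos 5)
Your proof is correct and follows essentially the same route as the paper's: both hinge on recognizing that the numerator of $B_k(z)-a$ is the Szeg\H{o} step $\mcs_{\bar a}(\Phi_{k-1}(z))$, and your converse direction is exactly the paper's substitution computation read in the opposite direction. The only difference is cosmetic — for the forward direction the paper reads the factorization off directly from the expanded composition, whereas you repackage it as a comparison of zero multisets — and your caution about nominal versus actual degrees in $(PQ)^*=P^*Q^*$ is warranted but resolves correctly, since every factor involved is monic with all roots in $\bbD$.
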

 
 \begin{proof}
 Let $B_n(z)=B_j(B_k(z))$. Then
 \[
 B_j=\frac{z(z-a_1)...(z-a_{j-1})}{(1-\overline{a_1}z)...(1-\overline{a_{j-1}}z)}
 \]
 for some $\{a_1,\ldots,a_{j-1}\}\in\bbD^{j-1}$. If
 \[
 B_k=\frac{z \Phi_{k-1}(z)}{\Phi_{k-1}^*(z)},
 \]
then
\begin{align*}
B_n(z)&=B_j\left(\frac{z \Phi_{k-1}(z)}{\Phi_{k-1}^*(z)}\right)\\
&=\frac{z \Phi_{k-1}(z)(z\Phi_{k-1}(z)-a_1\Phi_{k-1}^*(z))\cdots
(z\Phi_{k-1}(z)-a_{j-1}\Phi_{k-1}^*(z))}{\Phi_{k-1}^*(z)(\Phi_{k-1}^*(z)-\overline{a_1}z\Phi_{k-1}(z))\cdots
(\Phi_{k-1}^*(z)-\overline{a_{j-1}}z\Phi_{k-1}(z))} 
   \end{align*}
It follows that
\[
B_n(z)=\frac{z\Phi_{k-1}(z)\mcs_{\bar{a}_1}(\Phi_{k-1}(z))\cdots \mcs_{\bar{a}_{j-1}}(\Phi_{k-1}(z))}{\Phi_{k-1}^*(z)\mct_{\bar{a}_1}(\Phi_{k-1}^*(z))\cdots \mct_{\bar{a}_{j-1}}(\Phi_{k-1}^*(z))}
\]
and hence $\Phi_{n-1}$ has the desired factorization.  Reversing this reasoning shows the converse statement.
\end{proof}  

Our focus is on finding those $A\in S_{n-1}$ whose numerical range is bounded not just by an $n$-Poncelet curve but by an $n$-Poncelet ellipse.  The relationship between numerical ranges of $A\in S_{n-1}$ and Poncelet ellipses is a subject of significant ongoing research (see \cite{DGSSV,GBook,DGV,Fuj13,Fuj17,GW,HMFPS,Mirman1,MS}).  The following theorem (from \cite{MS}) is the starting point of our investigation and shows that the ellipses that we are looking for do in fact exist.  We state it using the terminology that we have defined so far.

 \begin{theorem}\label{Shukla}\cite{MS}
Suppose $f_1,f_2\in\bbD$.  There exists a Poncelet $n$-ellipse with foci at $f_1$ and $f_2$.  Furthermore, this ellipse forms the boundary of the numerical range of a matrix $A\in S_{n-1}$ and $f_1,f_2$ are eigenvalues of $A$.
 \end{theorem}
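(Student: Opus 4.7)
The plan is to prove Theorem~\ref{Shukla} in three stages: (i) construct a Poncelet $n$-ellipse with the prescribed foci $f_1,f_2$; (ii) realize this ellipse as $\partial W(A)$ for some $A\in S_{n-1}$; and (iii) verify that $f_1,f_2$ are eigenvalues of $A$.

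For (i), fix $f_1,f_2\in\bbD$ and consider the one-parameter family of confocal ellipses $E_s\subset\bbD$ with foci $f_1,f_2$, parametrized by the major-axis length $s\in(|f_1-f_2|,s_{\max})$, where $s_{\max}$ is the supremum for which $\overline{E_s}\subset\bbD$. At $s=|f_1-f_2|$ the locus degenerates to the segment $[f_1,f_2]$, and at $s=s_{\max}$ it becomes internally tangent to $\bbT$. For each interior value of $s$, orient $\bbT$ and define the Poncelet map $T_s\colon\bbT\to\bbT$ sending $p$ to the second intersection with $\bbT$ of the $E_s$-tangent line leaving $p$ in the chosen orientation; $T_s$ is an orientation-preserving circle homeomorphism depending continuously on $s$, so its rotation number $\rho(s)$ is a continuous function of $s$. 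A boundary analysis shows $\rho(s)\to 1/2$ as $s\downarrow|f_1-f_2|$ (the tangents degenerate to chords through $[f_1,f_2]$ and $T_s$ approaches a fixed-point-free involution of $\bbT$) and $\rho(s)\to 0$ as $s\uparrow s_{\max}$ (tangents collapse near the point of internal tangency). Since $1/n\in(0,1/2)$ for every $n\geq 3$, the intermediate value theorem produces an $s^*$ with $\rho(s^*)=1/n$, and the classical Poncelet closure theorem for a pair of conics then yields that $E^*:=E_{s^*}$ is a Poncelet $n$-ellipse with the prescribed foci.

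For (ii), I would invoke the correspondence recalled in Section~\ref{sec:background}, ultimately due to Mirman and refined in \cite{HMFPS}: every smooth, strictly convex curve in $\bbD$ with the $n$-Poncelet property is $\partial W(A)$ for some $A\in S_{n-1}$, unique up to unitary conjugation. Applied to $E^*$, this produces the desired matrix $A$. Step (iii) is the most delicate, and is where I expect the main obstacle, since the foci of $E^*$ lie strictly inside $W(A)$ and the usual ``eigenvalues on the boundary'' arguments do not apply. My approach is to work with the Kippenhahn polynomial $p_A(x,y,z):=\det(xH_1+yH_2+zI)$, where $H_1=(A+A^*)/2$ and $H_2=(A-A^*)/(2i)$. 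By Kippenhahn's theorem, $\partial W(A)$ is the convex hull of the dual of the curve $\{p_A=0\}$, and the foci of each conic component of this dual curve are eigenvalues of $A$. When $W(A)=E^*$, the Kippenhahn polynomial factors with a quadratic component whose dual is exactly $E^*$, and extracting the foci of this quadratic component then identifies $f_1$ and $f_2$ as roots of $\det(zI-A)$.
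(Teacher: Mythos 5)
First, a point of comparison: the paper does not prove this statement at all --- it is quoted from \cite{MS} and used as the starting point of the whole investigation --- so there is no internal proof to measure yours against; your proposal has to stand on its own. On that basis, step (i) is essentially sound: the confocal family $E_s$, the continuity of the rotation number of the Poncelet map, the boundary limits $1/2$ and $0$ (your description of the degenerate limit as a ``fixed-point-free involution'' is not literally accurate, since the forward support line alternates between passing through $f_1$ and through $f_2$, but the rotation number does tend to $1/2$), the intermediate value theorem, and Poncelet's closure theorem together produce the Poncelet $n$-ellipse $E^*$ with the prescribed foci. Step (iii) is also correct granted (ii): if $W(A)$ is an elliptical disk then the ellipse, being an irreducible conic contained in the Kippenhahn dual curve, is a whole component of it, and the classical fact that the real foci of the Kippenhahn curve are the eigenvalues of $A$ gives $f_1,f_2\in\sigma(A)$; this is exactly the mechanism the paper itself uses when it asserts that foci of elliptical components $C_j$ are eigenvalues.

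The genuine gap is step (ii). The ``correspondence'' you invoke --- that every smooth, strictly convex curve in $\bbD$ with the $n$-Poncelet property is $\partial W(A)$ for some $A\in S_{n-1}$ --- is not among the bijections recalled in Section \ref{sec:background} (those identify unitary-equivalence classes in $S_{n-1}$ with polynomials, regular Blaschke products, and Verblunsky coefficients, not with Poncelet curves), and it cannot be cited as a known theorem: only the forward implication ($A\in S_{n-1}$ produces a Poncelet curve) holds in general, while the converse for arbitrary convex Poncelet curves is known to fail (Mirman's counterexamples to the Gau--Wu conjecture; the title of \cite{MS}, ``A characterization of complex plane Poncelet curves,'' reflects precisely the fact that not every Poncelet curve arises as a numerical range). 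What is true is that the converse holds for \emph{ellipses} --- but that is the content of Theorem \ref{Shukla} itself, so invoking it here is circular. To close the gap you need an actual construction of $A$ from $E^*$: for instance, take two interlacing $n$-gons inscribed in $\bbT$ and circumscribed about $E^*$, use Wendroff's theorem for POPUC (\cite{GauWu04}, or \cite[Theorem 8]{MFSS}) to produce the unique $A\in S_{n-1}$ whose numerical range is inscribed in both, and then prove --- this is the nontrivial part --- that $\partial W(A)$ actually coincides with $E^*$ rather than being some other Poncelet curve inscribed in the same two polygons. That argument, or an equivalent one, is what \cite{MS} supplies and what your proposal is missing.
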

 
Our path forward is now clear.  Given $f_1,f_2\in\bbD$, we want to find a matrix $A\in S_{n-1}$ such that $\partial W(A)$ is an ellipse with foci at $f_1$ and $f_2$.  Theorem \ref{Shukla} tells us that such an $A$ exists, and we know that we can realize it as a cutoff CMV matrix.  Such a matrix has $n-1$ eigenvalues in $\bbD$, two of which must be $f_1$ and $f_2$.  A priori, there are no other restrictions on the remaining eigenvalues of $A$ other than they must be in $\bbD$.  In Section 3, we will show how to locate the third eigenvalue of $A$ when $n=4$ and in Section 4 we will consider the case when $n=6$ and find a set of algebraic equations in three variables whose unique solution marks the locations of the other three eigenvalues that we seek.

The most significant results known for general $n$ come from \cite{Mirman1,Mirman2,MS}.  The following theorem is a restatement of a result from \cite{Mirman1} using the terminology of matrices from $S_n$.
 
 \begin{theorem}\label{MirmanThm}
 Suppose $E_n$ is a Poncelet $n$-ellipse in $\bbD$ that is also the boundary of the numerical range of a matrix $A\in S_{n-1}$.  Suppose the foci of $E_n$ are $f_1$ and $f_2$.  Then the eigenvalues of $A$ can be labeled $\{w_1,\ldots,w_{n-1}\}$ so that $w_1=f_1$, $w_{n-1}=f_2$, and
 \begin{equation}\label{mirmans}
 w_{j-1}w_{j+1}=B_2(w_j;f_1,f_2),\qquad\qquad j=2,3,\ldots,n-2
 \end{equation}
 \end{theorem}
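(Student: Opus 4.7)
My plan is to exploit the ``package of ellipses'' structure arising from Darboux's theorem, mentioned in the paragraph preceding Theorem \ref{newdecomp}. When $C_1 = E_n$ is an ellipse, Darboux guarantees that the companion curves $C_2, C_3, \ldots, C_{\lceil (n-1)/2 \rceil}$ (obtained as envelopes of the $k$-diagonal polygons associated with the circumscribing $n$-gons) are all ellipses as well, with the possibility of degenerating to single points. Moreover, by the remark in the excerpt, the foci of all these curves together constitute exactly the $n-1$ eigenvalues of $A$. This will provide the natural geometric labeling of the eigenvalues that makes Mirman's recursion manifest.

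I would first set $w_1 = f_1$ and $w_{n-1} = f_2$, which are the foci of $C_1 = E_n$ and are eigenvalues by Theorem \ref{Shukla}. Then I would assign $w_2, w_{n-2}$ as the two foci of the Pentagram curve $C_2$, then $w_3, w_{n-3}$ as the foci of $C_3$, and so on, working inward. A count confirms this exhausts the eigenvalues: when $n$ is odd, the innermost $C_{(n-1)/2}$ is non-degenerate and contributes two foci $w_{(n-1)/2}, w_{(n+1)/2}$; when $n$ is even, the innermost curve degenerates to a single point, giving the central eigenvalue $w_{n/2}$. The remaining reflection ambiguity $w_j \leftrightarrow w_{n-j}$ in this assignment is to be fixed by the recursion itself.

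To derive the identity $w_{j-1} w_{j+1} = B_2(w_j; f_1, f_2)$, I would fix a generic circumscribing $n$-gon $P_1, \ldots, P_n$ and analyze the three envelope constructions producing $C_{j-1}, C_j, C_{j+1}$. The focal reflection property of confocal ellipses links the tangent lines of $C_j$ with those of $C_{j\pm 1}$. I expect this to translate, via the two-focal Blaschke product $B_2(\cdot; f_1, f_2)$ and the parametrization of the Poncelet family by the level sets $B_n(z) = \lambda$ with $\lambda \in \bbT$, where $B_n(z) = z\Phi_{n-1}(z)/\Phi_{n-1}^*(z)$, into a three-term product identity among the foci of consecutive $C_j$'s that is precisely Mirman's recursion.

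The hard part will be making the confocal geometric reasoning precise enough to extract the explicit algebraic identity of Blaschke products. One way to circumvent the difficulty is to verify the recursion on a one-parameter family of special configurations (for example, a continuous deformation starting from the degenerate case of concentric circles, where $f_1 = f_2$) and then extend to the general case by analytic continuation in the Verblunsky coefficients, using the bijection between $S_{n-1}$ and $\bbD^{n-1}$ listed in the excerpt. Alternatively, one could use the tangent triangle formed by any three consecutive curves $C_{j-1}, C_j, C_{j+1}$ and exploit M\"obius invariance of cross ratios to derive the identity directly from the focal geometry.
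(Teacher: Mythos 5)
There is a genuine gap: the identity \eqref{mirmans} is never actually derived. (Note that the paper itself does not prove this theorem either --- it is imported from \cite{Mirman1}; the paper only re-derives it in the special cases $n=4$ and $n=6$ by evaluating the explicit polynomial factorizations of Theorems \ref{3by3thm} and \ref{brianch6} at the eigenvalues and at their reflections, a route unavailable for prime $n$.) Your labeling step is sound and is indeed the right bookkeeping: assigning $w_j, w_{n-j}$ to the foci of $C_j$ is exactly how the paper's $n=5$ and $n=6$ discussions match \eqref{mirmans} to \eqref{caseN=5-3} and \eqref{Mir6}, and the focus count closes up correctly. But the entire content of the theorem is the three-term relation, and your argument for it is ``I expect this to translate \dots into precisely Mirman's recursion.'' Moreover the geometric mechanism you invoke is misstated: the curves $C_{j-1}, C_j, C_{j+1}$ are \emph{not} confocal (their foci are distinct eigenvalues --- that is the point of the theorem), so the focal reflection property of confocal ellipses does not link them. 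Both rescue strategies also fail concretely. Analytic continuation in the Verblunsky coefficients is unavailable because the matrices $A\in S_{n-1}$ with elliptical numerical range form a thin set (parametrized by $(f_1,f_2)\in\bbD^2$ inside $\bbD^{n-1}$, so of positive real codimension for $n>3$), the identity \eqref{mirmans} involves $\overline{f_1},\overline{f_2}$ and hence is not holomorphic in the data, and your proposed base case $f_1=f_2=0$ is a single point, not a set from which one can continue. M\"obius maps do not preserve ellipses or foci, so cross-ratio invariance gives no purchase on ``focal geometry.''

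The actual mechanism, visible in Section 5 of the paper and in \cite[Eqs.~29--32]{Mirman1}, is the package quadratic
\[
q(x,y;t)=\left(y+\frac{x-f_1}{1-\overline{f_1}x}\right)\left(y+\frac{x-f_2}{1-\overline{f_2}x}\right)-\frac{4txy}{(1-\overline{f_1}x)(1-\overline{f_2}x)} .
\]
One shows that consecutive eigenvalues in the correct cyclic labeling satisfy $q(w_j,w_{j\pm1};b^2)=0$, where $b$ is the semiminor axis of $E_n$. Then, for fixed $x=w_j$, the equation $q(w_j,y;b^2)=0$ is a quadratic in $y$ with roots $w_{j-1}$ and $w_{j+1}$, and its constant term is $\frac{(w_j-f_1)(w_j-f_2)}{(1-\overline{f_1}w_j)(1-\overline{f_2}w_j)}=B_2(w_j;f_1,f_2)$; Vieta's formula gives \eqref{mirmans} in one line. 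The nontrivial input is therefore the package relation $q=0$ between consecutive foci, not any reflection property, and your proposal supplies no substitute for it.
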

 
We will refer to the system of equations \eqref{mirmans} as the ``Mirman system''.  It gives us a necessary but not sufficient condition for a set of eigenvalues to be the spectrum of a matrix in $S_{n-1}$ whose numerical range is bounded by an ellipse with foci at $f_1$ and $f_2$.  We will see that this system of equations has many solutions and only one of them has the desired interpretation.  In Section 5, we will consider matrices in $S_4$ and discover some properties of all of the solutions to the Mirman system when $n=5$.

   % % % % % % % % % % % % % % % % % % % % % % % % % % % % % % % % % % % % % % % % % % % % % % % % % % % % % % % % % % % % % % 
  \section{The quadrilateral case} \label{sec:quadrilateral}
  % % % % % % % % % % % % % % % % % % % % % % % % % % % % % % % % % % % % % % % % % % % % % % % % % % % % % % % % % % % % % % 

In this section, we will consider matrices in $S_3$ to show how our approach to Poncelet ellipses using OPUC allows us to reformulate, and in some cases strengthen, existing results in the literature.  A classification of those matrices $A\in S_3$ for which $W(A)$ is an elliptical disk is given in \cite{Fuj13}.  Recall that a Blaschke product is regular if it maps $0$ to $0$.  Fujimura \cite{Fuj13} showed that $A\in S_3$ has a numerical range that is an elliptical disk if and only if there are regular Blaschke products $B_2,C_2$ of degree $2$ such that
\begin{equation}\label{3by3}
B_A(z):=\frac{z\det(zI-A)}{\det(zI-A)^*}=B_2(C_2(z))
\end{equation}
(see also \cite{DGSSV,GW}).  By Theorem \ref{newdecomp}, we can state the following theorem.

\begin{theorem}\label{3by3thm}
Suppose $A\in S_3$.  The numerical range of $A$ is an elliptical disk if and only if there exist $a,b\in\bbD$ so that
\[
\det(zI-A)=\Phi_1(z;a)\Phi_2(z;a,b).
\]
If this condition holds, then the pentagram curve is the single point $\bar{a}$ and the foci of $\partial W(A)$ are the zeros of $\Phi_2(z;a,b)$.
\end{theorem}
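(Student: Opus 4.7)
The plan is to combine Fujimura's characterization \eqref{3by3} with Theorem \ref{newdecomp}. By \eqref{3by3}, $W(A)$ is an elliptical disk if and only if the Blaschke product $B_A(z)$ factors as a composition $B_2\circ C_2$ of two regular degree-$2$ Blaschke products. This is precisely the hypothesis of Theorem \ref{newdecomp} in the case $n=4$, $j=k=2$, which tells us that such a composition exists if and only if
\[
\det(zI-A)=\Phi_3(z)=\Phi_1(z)\,\mcs_{\bar{a}_1}(\Phi_1(z))
\]
for some monic linear $\Phi_1$ with its zero in $\bbD$ and some $a_1\in\bbD$. Writing $\Phi_1(z)=\Phi_1(z;a)=z-\bar a$ and setting $b=\bar{a}_1\in\bbD$, the Szeg\H{o} recursion \eqref{szego} produces $\mcs_b(\Phi_1(z;a))=\Phi_2(z;a,b)$, yielding the asserted factorization. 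Both directions of the equivalence follow by reversing this chain.

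For the second claim, note that the three eigenvalues of $A$ are the zero $\bar a$ of $\Phi_1(z;a)$ together with the two zeros of $\Phi_2(z;a,b)$. By Theorem \ref{Shukla} two of these must be the foci of $\partial W(A)$, and Darboux's theorem (as discussed in Section \ref{sec:background}) ensures that the remaining dual component $C_2$ is itself a (possibly degenerate) ellipse whose foci also appear in the spectrum of $A$. To match the algebraic factorization with this geometric decomposition, I would invoke the second conclusion of Theorem \ref{newdecomp}: the zero of $\Phi_{k-1}=\Phi_1(z;a)$ coincides with the nonzero zero of the inner Blaschke factor $C_2(z)/z$. Via the bijection listed in Section \ref{sec:background}, this inner factor corresponds to a $1\times 1$ cutoff CMV matrix whose numerical range is the single point $\bar a$; identifying this with the pentagram curve then forces the remaining two eigenvalues, i.e.\ the zeros of $\Phi_2(z;a,b)$, to be the foci of $\partial W(A)$.

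The main obstacle is the last geometric identification: justifying that the single point extracted from the inner Blaschke factor really is the pentagram component $C_2$ of $A$ rather than an unrelated $1\times 1$ numerical range. One way to close this gap is to invoke the dual-curve machinery of \cite{HMFPS} together with the compositional decomposition results of \cite{DGSSV}, which describe precisely how each factor in a Blaschke composition produces a component of the Poncelet dual curve. A more elementary alternative is to use the classical fact that the diagonals of every Poncelet quadrilateral circumscribing an ellipse inside $\bbT$ are concurrent, and to verify by a short direct computation that this concurrence point equals $\bar a$ for the factorization produced above.
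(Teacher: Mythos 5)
Your proposal is correct and follows essentially the same route as the paper, which likewise obtains the theorem by combining Fujimura's characterization \eqref{3by3} with Theorem \ref{newdecomp} in the case $n=4$, $j=k=2$, and reads off the pentagram point and foci from the factorization $\Phi_1(z;a)\Phi_2(z;a,b)$. The one step you flag as a gap---identifying the zero $\bar a$ of the inner factor $C_2(z)/z$ with the pentagram component $C_2$ of the dual curve---is precisely the part the paper also leaves to the cited literature (\cite{DGSSV}, \cite{Fuj13}, \cite{HMFPS}), so nothing essential is missing from your argument.
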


\textbf{Remark.}  Theorem \ref{3by3thm} implies that to any Poncelet $4$-ellipse $E\subseteq\bbD$, one can associate a well-defined point in $\bbD$ that will be the pentagram curve of the matrix $A\in S_3$ that satisfies $\partial W(A)=E$.  We will call this point the \textit{pentagram point} of the ellipse $E$.

\bigskip

\begin{figure}[h]
	\begin{center}
	\includegraphics[width=0.35\linewidth]{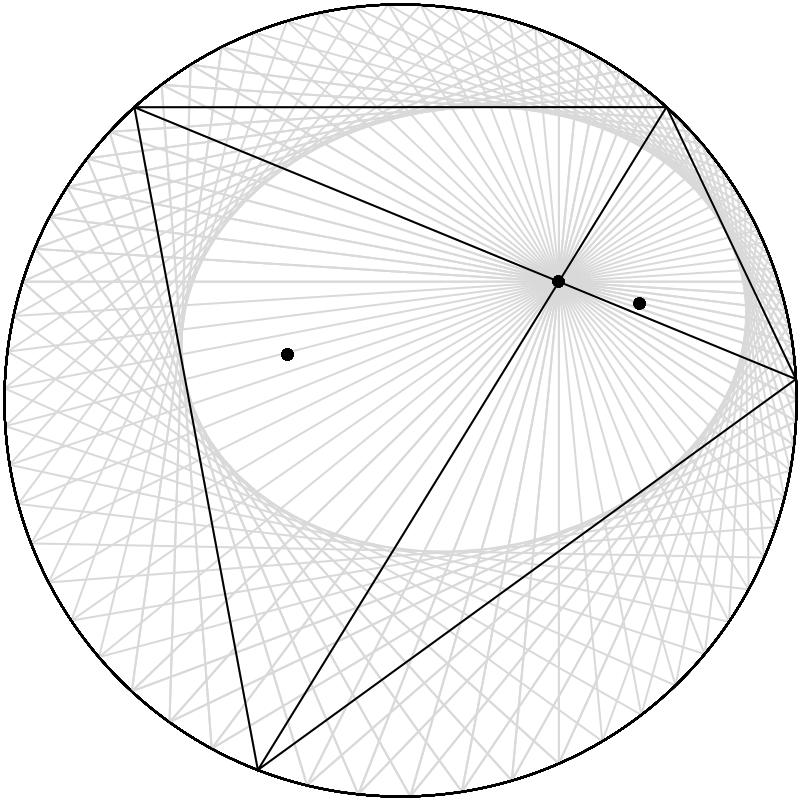}
	\caption{The Poncelet 4-ellipse with foci  $-0.28 + 0.12i$, $0.6  + 0.24 i$ and pentagram point $0.4+0.3i$.}
	\end{center}
	\label{fig:Poncelet4Ellpse}
\end{figure}

Theorem \ref{3by3thm} gives us a new interpretation of the algorithm for finding a matrix in $S_3$ with elliptic numerical range and two prescribed foci (such an algorithm can be found in \cite{GW}).  Indeed, given $\{f_1,f_2\}\in\bbD^2$, consider the polynomial $\Phi_2(z)=(z-f_1)(z-f_2)$.  Perform the Inverse Szeg\H{o} recursion to obtain a degree $1$ monic polynomial $\Phi_1(z;\bar{f}_3)$ whose zero $f_3$ is in $\bbD$.  The $3\times3$ cutoff CMV matrix with eigenvalues at $\{f_1,f_2,f_3\}$ has the desired property.

We can even generalize this algorithm to find an $A\in S_3$ with elliptic numerical range having one prescribed focus and a pentagram curve that is a prescribed point.

\begin{theorem}\label{3by3mixmatch}
Given $\{f_1,f_2\}\in\bbD^2$, there exists a unique $3\times3$ cutoff CMV matrix whose numerical range is bounded by an ellipse with one focus at $f_1$ and such that the pentagram curve is the single point $\{f_2\}$.
\end{theorem}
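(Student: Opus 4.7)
The plan is to invoke Theorem~\ref{3by3thm} and read off the required data directly from the factorization it supplies. By that theorem, a $3\times 3$ cutoff CMV matrix $A$ has elliptical numerical range if and only if $\det(zI-A)=\Phi_1(z;a)\Phi_2(z;a,b)$ for some $(a,b)\in\bbD^2$; in that case the pentagram point is $\bar a$ and the foci of $\partial W(A)$ are the zeros of $\Phi_2(z;a,b)$. Since the bijection recorded in Section~\ref{sec:background} identifies $3\times 3$ cutoff CMV matrices with monic cubics having all zeros in $\bbD$, the theorem reduces to producing a unique pair $(a,b)\in\bbD^2$ satisfying $\bar a=f_2$ and $\Phi_2(f_1;a,b)=0$.

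The first condition forces $a=\bar f_2\in\bbD$ with no ambiguity. For the second, I would expand $\Phi_2(z;a,b)$ using the Szeg\H{o} recursion to obtain
\[
\Phi_2(z;a,b)=z^2+(\bar b a-\bar a)z-\bar b,
\]
then substitute $z=f_1$ and $a=\bar f_2$. The result is a linear equation in $\bar b$,
\[
f_1^2-f_2 f_1+\bar b\,(\bar f_2 f_1-1)=0,
\]
whose coefficient $\bar f_2 f_1-1$ is nonzero because $|\bar f_2 f_1|<1$. Solving yields the unique candidate $\bar b = f_1(f_2-f_1)/(\bar f_2 f_1-1)$.

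The only genuinely substantive step, and the one I would flag as the main obstacle, is verifying that the resulting $b$ lies in $\bbD$ rather than merely in $\bbC$. Rewriting
\[
|b|=|f_1|\cdot\left|\frac{f_2-f_1}{1-\bar f_2 f_1}\right|,
\]
the second factor is a degree-one Blaschke factor evaluated at a point of $\bbD$, hence strictly less than $1$; combined with $|f_1|<1$ this gives $|b|<1$. Thus $(a,b)\in\bbD^2$ is uniquely determined by $(f_1,f_2)$, and so is the cubic $\Phi_1(z;a)\Phi_2(z;a,b)$, which via the Section~\ref{sec:background} bijection picks out a unique $3\times 3$ cutoff CMV matrix with the required pentagram point and focus.
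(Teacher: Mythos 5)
Your proposal is correct and follows essentially the same route as the paper: set $a=\bar f_2$ to fix the pentagram point, then solve the linear equation $\Phi_2(f_1;\bar f_2,b)=0$ for $\bar b$, arriving at the same formula $\bar b=f_1(f_1-f_2)/(1-\bar f_2 f_1)$. The only difference is that you spell out the verification $|b|<1$ via the Blaschke-factor bound, which the paper leaves implicit in its ``it is easy to see.''
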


\begin{proof}
By Theorem \ref{3by3thm}, this amounts to showing that we can find $b\in\bbD$ such that $\Phi_2(z;\bar{f}_2,b)$ vanishes at $f_1$.  It is easy to see that this can be achieved precisely by setting
\[
\bar{b}=\frac{f_1\Phi_1(f_1;\bar{f}_2)}{\Phi_1^*(f_1;\bar{f}_2)}=\frac{f_1(f_1-f_2)}{1-\bar{f}_2f_1}
\]
(see also \cite{SimonTotik}).
\end{proof}

One can also find an $A_\alpha \in S_3$ with $\partial W(A_\alpha)$ an ellipse and whose pentagram curve is a specified point.  Since this is a weaker set of conditions than was used in Theorem \ref{3by3mixmatch}, one expects that we will have many solutions to this problem.  Rather than a unique matrix as in Theorem \ref{3by3mixmatch}, fixing the pentagram point yields a family of matrices in $S_3$ parametrized by the Verblunsky coefficient of $\Phi_2(z;a,b)$.

\begin{proposition}\label{quadpentfix}
Given $\alpha_0 \in \mathbb{D}$, there exists a one-parameter family $A_\alpha \in S_3, \hspace{1mm} \alpha \in \mathbb{D}$ such that for each $A_\alpha$, $\partial W(A_\alpha)$ is an ellipse and the pentagram point of $A_\alpha$ is $\alpha_0$.
\end{proposition}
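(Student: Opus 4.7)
The plan is to read off the statement directly from Theorem \ref{3by3thm}. That theorem gives a complete characterization: a matrix in $S_3$ has elliptic numerical range exactly when its characteristic polynomial factors as $\Phi_1(z;a)\Phi_2(z;a,b)$ for some $a,b\in\bbD$, and in that case the pentagram point equals $\bar a$. Since the Verblunsky bijection (item (ii)$\leftrightarrow$(i) in the list preceding Theorem \ref{newdecomp}) associates to any monic degree-$3$ polynomial with zeros in $\bbD$ a unique cutoff CMV matrix in $S_3$, prescribing the pair $(a,b)$ is the same as prescribing a matrix in the elliptic-numerical-range subclass of $S_3$.

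So first I would fix $a=\overline{\alpha_0}\in\bbD$, which is forced by the pentagram-point condition $\bar a=\alpha_0$. Next, for each free parameter $\alpha\in\bbD$ I set $b=\alpha$ and form the monic polynomial
\[
p_\alpha(z)\;=\;\Phi_1(z;\overline{\alpha_0})\,\Phi_2(z;\overline{\alpha_0},\alpha).
\]
All three zeros of $p_\alpha$ lie in $\bbD$ (the zero of $\Phi_1(z;\overline{\alpha_0})$ is $\alpha_0\in\bbD$, and the zeros of any OPUC $\Phi_2$ lie in $\bbD$), so the Verblunsky bijection produces a unique cutoff CMV matrix $A_\alpha\in S_3$ whose characteristic polynomial equals $p_\alpha$. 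Then Theorem \ref{3by3thm}, applied in the reverse direction, immediately yields that $\partial W(A_\alpha)$ is an ellipse and the pentagram point of $A_\alpha$ is $\overline{a}=\alpha_0$.

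Finally, I would observe that as $\alpha$ ranges over $\bbD$ the second Verblunsky coefficient of the factor $\Phi_2(z;\overline{\alpha_0},\alpha)$ varies freely, so the matrices $A_\alpha$ are genuinely distinct (the Verblunsky bijection is injective), giving an honest one-parameter family. There is really no main obstacle here: once Theorem \ref{3by3thm} is in hand, the proof is essentially bookkeeping. The only point that deserves a sentence of comment is that the factorization in Theorem \ref{3by3thm} decouples the pentagram data (encoded in $a$) from the focal data (encoded in $b$), which is precisely what makes fixing one and sweeping the other straightforward.
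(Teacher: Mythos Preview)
Your proof is correct and follows essentially the same route as the paper: both arguments set $a=\overline{\alpha_0}$, form $\Phi_1(z;\overline{\alpha_0})\Phi_2(z;\overline{\alpha_0},\alpha)$ for a free $\alpha\in\bbD$, invoke the zeros-in-$\bbD$/Verblunsky bijection to produce a cutoff CMV matrix $A_\alpha\in S_3$ with this characteristic polynomial, and then appeal to Theorem~\ref{3by3thm}. Your additional remark that distinct $\alpha$ yield distinct $A_\alpha$ is a nice touch the paper leaves implicit.
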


\begin{proof}
Suppose $\alpha_0$ is given and consider the polynomial $\Phi_1(z;\bar{\alpha}_0)$. %maybe a note about notation from first paper?
For any $\alpha \in \mathbb{D}$, consider $\Phi_2(z;\bar{\alpha}_0,\alpha)$ and define
$$
\phi_3(z)=\Phi_1(z;\bar{\alpha}_0)\Phi_2(z;\bar{\alpha}_0,\alpha).
$$
Thinking of $\phi_3(z)$ as an OPUC and applying the inverse Szeg\H{o} recursion allows us to recover the Verblunsky coefficients of $\phi_3(z)$ and thus define a cutoff CMV matrix, $A_\alpha \in S_3$, whose characteristic polynomial is $\phi_3(z)$. 
As $\phi_3(z)$ factors into a degree one and degree two OPUC related by the Szeg\H{o} recursion, Theorem \ref{3by3thm} implies that $\partial W(A_\alpha)$ is an ellipse and the pentagram point of $A_\alpha$ is $\alpha_0$.
\end{proof}

\begin{figure}[htb] 
	\begin{center}
		\begin{tabular}{c p{10mm}c}		
			\includegraphics[width=0.35\linewidth]{Fig_foci2} 
			& \ 	& \includegraphics[width=0.35\linewidth]{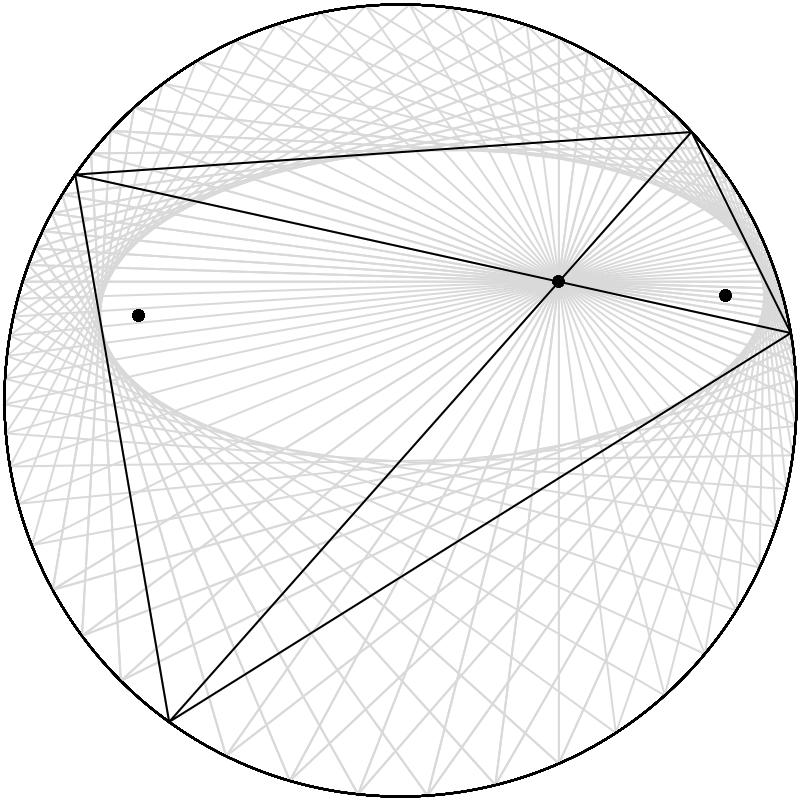}
	\end{tabular}
	\end{center}
	\caption{Two Poncelet 4-ellipses with the same pentagram point, $0.4+0.3i$. }
	\label{fig:quadsamepent}
\end{figure}

Now suppose we have an $A\in S_3$ with numerical range given by an elliptical disk.  Suppose that the foci of the ellipse bounding that elliptical disk are $\{f_1,f_2\}$ and the pentagram point of that ellipse is $\{f_3\}$.  Then
\[
\det(zI-A)=(z-f_1)(z-f_2)(z-f_3).
\]
By Theorem \ref{3by3thm}, we also have
\[
\det(zI-A)=(z-f_3)(z(z-f_3)-b(1-\bar{f}_3z))
\]
for some $b\in\bbD$.  Evaluating both of these expressions at $0$ and equating them shows $b=-f_1f_2$.  It follows that
\begin{equation}\label{3by3-1}
(z-f_1)(z-f_2)=z(z-f_3)+f_1f_2(1-\bar{f}_3z)=z^2+z(-f_3-f_1f_2\bar{f}_3)+f_1f_2.
\end{equation}
If we replace $z$ by $f_3$ in \eqref{3by3-1}, we get
\begin{equation}\label{3by3-2}
(f_3-f_1)(f_3-f_2)=f_1f_2(1-|f_3|^2).
\end{equation}
If we look at the reversed polynomials in \eqref{3by3-1} and replace $z$ by $f_3$, we get
\begin{equation}\label{3by3-3}
(1-\bar{f}_1f_3)(1-\bar{f}_2f_3)=1-|f_3|^2.
\end{equation}
If we divide \eqref{3by3-2} by \eqref{3by3-3}, we recover the Mirman system: $f_1f_2=B_2(f_3;f_1,f_2)$.
Solving for $f_3$ yields the familiar formula for $f_3$ in terms of  $f_{1},f_{2}$:
$$
f_3= \frac{f_1+f_2 - f_2 |f_1|^2 - f_1 |f_2|^2}{1 -  |f_1 f_2|^2}.
$$
Notice that we have recovered something that the Mirman system does not give us.  One can verify that $f_3=0$ is a solution to the Mirman system, but this does not (in general) give us the matrix in $S_3$ with elliptical numerical range.  Our calculations using OPUC eliminate this extraneous solution.

  % % % % % % % % % % % % % % % % % % % % % % % % % % % % % % % % % % % % % % % % % % % % % % % % % % % % % % % % % % % % % % 
  \section{The hexagon case} \label{sec:hexagon}
  % % % % % % % % % % % % % % % % % % % % % % % % % % % % % % % % % % % % % % % % % % % % % % % % % % % % % % % % % % % % % % 

In this section we will consider curves that are realized as the envelope of line segments joining vertices of hexagons inscribed in the unit circle (see \cite[Section 3]{HMFPS} for a rigorous discussion of envelopes of cyclic polygons).  We know that such curves have three components: the largest one (outer) formed by connecting adjacent eigenvalues of the unitary dilations is the \textit{Poncelet curve}, the middle component formed by joining alternate eigenvalues is the \textit{pentagram curve}, and the smallest component formed by joining opposite eigenvalues is the \textit{Brianchon curve}.  Our first results are the following two theorems.
\begin{theorem}\label{pent6}
	Let $\mcg^{(5)}$ be a cut-off CMV matrix and 	
	\begin{equation*}
	\Phi_5(z):=\det (z I_5 -\mcg^{(5)}).
	\end{equation*}
	The following are equivalent.
	\begin{enumerate}
		\item[\rm{(i)}] the pentagram curve of $\mcg^{(5)}$ is an ellipse;
		\item[\rm{(ii)}] there exist regular Blaschke products $\{B_j\}_{j=2}^3$ with $\deg(B_j)=j$ such that
		\[
		\frac{z\Phi_5(z)}{\Phi_5^*(z)}=B_2(B_3(z))
		\]
		\item[\rm{(iii)}] There exist $\alpha_0,\alpha_1,\alpha_2\in\mathbb D$ so that
		\[
		\Phi_5(z)=\Phi_2(z;\alpha_0,\alpha_1) \Phi_3(z;\alpha_0,\alpha_1,\alpha_2) . %\Phi_2(z;\alpha_0,\alpha_1)\Phi_3(z;\alpha_0,\alpha_1,\alpha_2)
		\]
	\end{enumerate}
	If any of these conditions hold, then the foci of the pentagram curve are the zeros of $B_3(z)/z$ or equivalently, the zeros of $ \Phi_2(z;\alpha_0,\alpha_1) $.
\end{theorem}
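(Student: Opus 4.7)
The plan is to prove (ii) $\Leftrightarrow$ (iii) as an immediate consequence of Theorem \ref{newdecomp}, and (i) $\Leftrightarrow$ (ii) by analyzing how the circumscribing hexagons of $\partial W(\mcg^{(5)})$ decompose into pairs of Poncelet 3-triangles. The main substantive content lies in the second equivalence, in particular in producing the degree-2 Blaschke product $B_2$ from the geometric hypothesis on the pentagram curve.

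For (ii) $\Leftrightarrow$ (iii), apply Theorem \ref{newdecomp} with $n=6$, $j=2$, $k=3$. The factorization it supplies reads $\Phi_5(z) = \Phi_2(z)\,\mcs_{\bar a_1}(\Phi_2(z))$, and the Szeg\H o recursion identifies $\mcs_{\bar a_1}(\Phi_2(z))$ with $\Phi_3(z;\alpha_0,\alpha_1,\alpha_2)$ where $\alpha_2 := \bar a_1$. The last sentence of Theorem \ref{newdecomp} also gives that the zeros of $B_3(z)/z$ are precisely the zeros of $\Phi_{k-1}(z) = \Phi_2(z;\alpha_0,\alpha_1)$, establishing the equivalence of the two descriptions of the foci.

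For (ii) $\Rightarrow$ (i), let $\mcg^{(2)}$ denote the $2\times 2$ cutoff CMV matrix with Verblunsky coefficients $\alpha_0,\alpha_1$. Then $\mcg^{(2)} \in S_2$, and by the $2\times2$ elliptical range theorem $E := \partial W(\mcg^{(2)})$ is an ellipse in $\bbD$ with the 3-Poncelet property whose foci are the eigenvalues of $\mcg^{(2)}$, i.e.\ the zeros of $\Phi_2(z;\alpha_0,\alpha_1)$. For each $\lambda \in \bbT$ the hexagon vertex set $V_\lambda := B_A^{-1}(\lambda)$ (where $B_A := z\Phi_5(z)/\Phi_5^*(z)$) splits as $B_3^{-1}(w_1) \sqcup B_3^{-1}(w_2)$ with $\{w_1,w_2\} = B_2^{-1}(\lambda)$, and each triple $B_3^{-1}(w_i)$ is the vertex set of a Poncelet 3-triangle tangent to $E$. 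Because two distinct Poncelet 3-triangles for the same ellipse necessarily have interlacing vertices on $\bbT$, this splitting matches the alternate-vertex splitting of the hexagon, and hence the pentagram curve, as the envelope of the alternate-vertex triangle sides, coincides with $E$, identifying both the curve and its foci.

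For (i) $\Rightarrow$ (ii), suppose the pentagram curve $E$ is an ellipse. By the envelope definition, each alternate-vertex triangle of every hexagon $V_\lambda$ is inscribed in $\bbT$ with all three sides tangent to $E$, so by Poncelet's closure theorem $E$ is a Poncelet 3-ellipse. Theorem \ref{Shukla} applied with $n=3$, together with the bijection between $S_2$ and regular degree-3 Blaschke products, produces $B_3$ whose $\bbT$-fibers are precisely the vertex sets of the Poncelet 3-triangles for $E$, so each $V_\lambda$ is a disjoint union of two such fibers and $B_A$ is constant on every $\bbT$-fiber $B_3^{-1}(w)$. A standard identity-theorem argument applied to the three local holomorphic branches of $B_A \circ B_3^{-1}$ extends the fiberwise constancy to all $\bbD$-fibers, and the resulting well-defined map $B_2(z) := B_A(v)$ for $v \in B_3^{-1}(z)$ satisfies $B_2(\bbT) \subseteq \bbT$, $\deg B_2 = \deg B_A / \deg B_3 = 2$, and $B_2(0) = B_A(0) = 0$, making it a regular degree-2 Blaschke product with $B_A = B_2 \circ B_3$. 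The main obstacle in the whole argument is this last identity-theorem promotion, where the global analytic structure of the quotient must be extracted from purely boundary data.
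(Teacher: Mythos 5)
Your handling of (ii) $\Leftrightarrow$ (iii) is exactly what the paper does: apply Theorem \ref{newdecomp} with $n=6$, $j=2$, $k=3$, read off $\Phi_5=\Phi_2\cdot\mcs_{\bar a_1}(\Phi_2)=\Phi_2(z;\alpha_0,\alpha_1)\Phi_3(z;\alpha_0,\alpha_1,\alpha_2)$, and get the identification of the foci with the zeros of $B_3(z)/z$ from the last sentence of that theorem. Where you diverge is (i) $\Leftrightarrow$ (ii): the paper disposes of this in one line by citing \cite{DGSSV}, whereas you reconstruct the argument from scratch. Your reconstruction is essentially sound and is in the spirit of the cited source: the fiber-splitting $B_A^{-1}(\lambda)=B_3^{-1}(w_1)\sqcup B_3^{-1}(w_2)$, the interlacing of any two distinct Poncelet $3$-triangles for the same ellipse (which forces the splitting to be the alternate-vertex one), and the promotion of ``$B_A$ is constant on $\bbT$-fibers of $B_3$'' to a global factorization $B_A=B_2\circ B_3$ are exactly the right ingredients. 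The trade-off is self-containedness versus length: the paper's citation buys brevity, while your version makes visible why the pentagram curve (rather than, say, the Poncelet curve) is the ellipse controlled by the \emph{inner} factor $B_3$, and it delivers the focal identification as a byproduct of the same construction rather than as a separate claim.

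Two steps deserve slightly more care than you give them. First, in (i) $\Rightarrow$ (ii), the identity theorem applied near a point of $\bbT$ only shows the three branches $B_A\circ\sigma_i$ agree on a neighborhood of that boundary point; to get a single-valued holomorphic $B_2$ on all of $\bbD$ you still need either a monodromy/connectedness argument on $\bbD$ minus the critical values of $B_3$, or the standard device of passing through the branch-symmetric function $\prod_{i}(\zeta-B_A(\sigma_i(w)))$ and then invoking removable singularities; this is precisely the content of the decomposition criterion in \cite{DGSSV}, so you are re-deriving rather than circumventing it. Second, in the same direction you need the pentagram ellipse to be \emph{inscribed} in each alternate-vertex triangle (not merely tangent to the three side-lines) before Poncelet closure and Lemma-type uniqueness apply; this follows from the nesting of the components $C_1\supseteq\mathrm{conv}(C_2)$ in the Kippenhahn-type decomposition of \cite{HMFPS}, but it should be said. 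Neither point is a genuine gap, only a place where ``standard'' is doing real work.
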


\begin{figure}[htb]
	\begin{center}
	\includegraphics[width=0.35\linewidth]{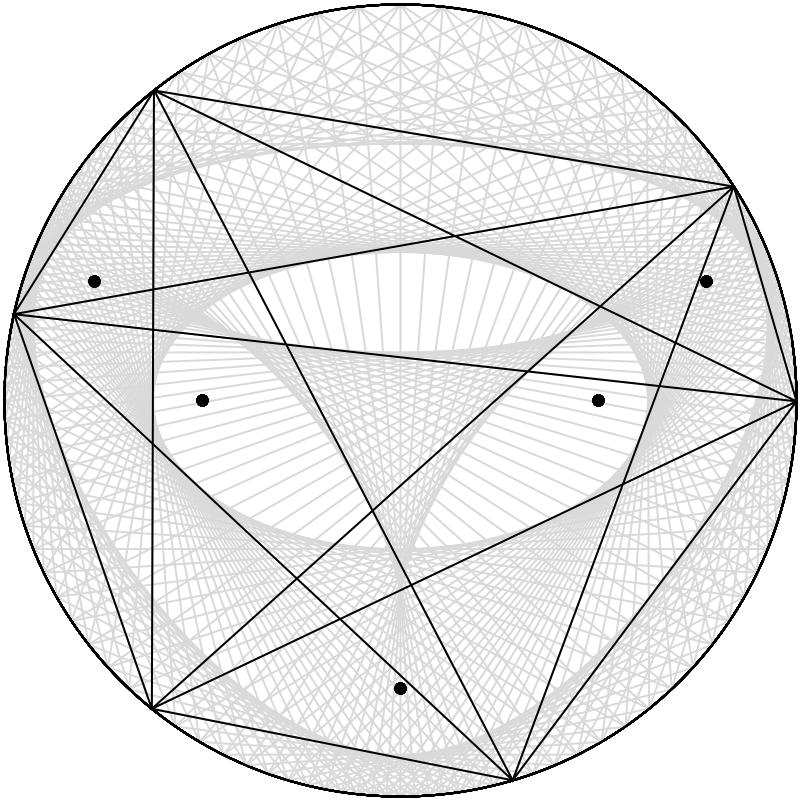}
	\caption{A $6$-Poncelet curve such that the  pentagram curve is an ellipse but the Brianchon curve is not a single point.}
	\end{center}
	\label{fig:BrianchonpointBis}
\end{figure}

\begin{theorem}\label{brianch6}
	If we retain the notation from Theorem \ref{pent6}, then the following are equivalent:
	\begin{enumerate}
		\item[\rm{(i)}] the Brianchon curve of $\mcg^{(5)}$ is a single point;
		\item[\rm{(ii)}] there exist regular Blaschke products $\{B_j\}_{j=2}^3$ with $\deg(B_j)=j$ such that
		\[
		\frac{z\Phi_5(z)}{\Phi_5^*(z)}=B_3(B_2(z))
		\]
		\item[\rm{(iii)}] $\,$There exist $\alpha_0,\alpha_1,\gamma_1\in\mathbb D$ so that
		\[
		\Phi_5(z)=\Phi_1(z;\alpha_0)\Phi_2(z;\alpha_0,\alpha_1)\Phi_2(z;\alpha_0,\gamma_1)
		\]
	\end{enumerate}
	If any of these conditions hold, then the Brianchon point is the zero of $B_2(z)/z$ and the zero of $\Phi_1(z;\alpha_0)$.
\end{theorem}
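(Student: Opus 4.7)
The plan is to close the triangle of implications by proving (ii) $\iff$ (iii) as a direct corollary of Theorem \ref{newdecomp}, applied with $n = 6$, $j = 3$, $k = 2$, and then proving (i) $\iff$ (ii) via the geometric involution on $\bbT$ induced by a degree $2$ Blaschke product whose nonzero zero is the Brianchon point. Specifically, Theorem \ref{newdecomp} gives $B_6(z) = B_3(B_2(z))$ if and only if
\[
\Phi_5(z) = \Phi_1(z)\, \mcs_{\bar{a}_1}(\Phi_1(z))\, \mcs_{\bar{a}_2}(\Phi_1(z))
\]
for some $\Phi_1$ having its zero in $\bbD$ and some $\{a_1, a_2\} \in \bbD^2$; writing $\Phi_1(z) = \Phi_1(z; \alpha_0)$ and setting $\alpha_1 := \bar{a}_1$ and $\gamma_1 := \bar{a}_2$ immediately yields the factorization in (iii).

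For (ii) $\Rightarrow$ (i), I assume $B_6 = B_3 \circ B_2$ with $B_2(z) = z(z-a)/(1-\bar{a}z)$. For any circumscribing hexagon with cyclically ordered vertices $P_1, \ldots, P_6 \in \bbT$ and common value $\lambda := B_6(P_j) \in \bbT$, the six preimages split into three pairs under $B_2$. Because $B_2|_{\bbT}$ is a strictly monotone $2$-to-$1$ covering (its argument increases by $4\pi$ as $\arg z$ traverses $[0, 2\pi)$), the pairing must be by cyclically opposite indices $\{P_1, P_4\}$, $\{P_2, P_5\}$, $\{P_3, P_6\}$. For each such pair, $P + Q = a - \bar{a}\lambda$ and $PQ = -\lambda$, from which a direct calculation gives $(a - P)\overline{(a - Q)} = |a|^2 - 1 \in \bbR$, showing that $a$ lies on the line $PQ$. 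Hence every chord joining opposite vertices passes through $a$, so the Brianchon curve collapses to the single point $\{a\}$.

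For (i) $\Rightarrow$ (ii), let $a$ denote the Brianchon point and introduce the geometric involution $\sigma$ of $\bbT$ sending $P$ to the second intersection of the line through $P$ and $a$ with $\bbT$. The same computation shows $\sigma$ coincides on $\bbT$ with the deck transformation of the degree $2$ Blaschke product $B_2(z) = z(z-a)/(1-\bar{a}z)$. The Brianchon hypothesis yields $B_6(\sigma(P_j)) = B_6(P_j)$ on the dense set of all hexagon vertices, and by continuity on all of $\bbT$. To lift this into $\bbD$, I would observe that the two $B_2$-preimages $\zeta_1(w), \zeta_2(w)$ of $w \in \bbD$ are the roots of $\zeta^2 + (\bar{a}w - a)\zeta - w = 0$, so their elementary symmetric functions are polynomial in $w$; consequently $(B_6(\zeta_1(w)) - B_6(\zeta_2(w)))^2$ is a rational function of $w$ which vanishes on $\bbT$, hence identically. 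The common value $B_3(w) := B_6(\zeta_1(w))$ is then a well-defined rational self-map of $\bbD$ preserving $\bbT$, i.e., a Blaschke product, of degree $6/2 = 3$; and $B_3(0) = B_6(0) = 0$ because $B_2(0) = 0$, so $B_3$ is regular. The identification of the Brianchon point with the zero of $\Phi_1(z; \alpha_0)$ is then the last sentence of Theorem \ref{newdecomp} applied with $k = 2$.

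The main obstacle is the $\bbD$-extension step in (i) $\Rightarrow$ (ii): the involution $\sigma$ is defined only on $\bbT$ and is not the boundary trace of a holomorphic self-map of $\bbD$, so the identity $B_6 \circ \sigma = B_6$ does not admit a direct analytic continuation into the disk. Passing instead through the elementary symmetric functions of the two $B_2$-preimages, which \emph{are} polynomials in $w$, is the key trick that converts a real-geometric statement on $\bbT$ into a holomorphic identity on $\bbD$ amenable to analytic continuation.
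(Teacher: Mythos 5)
Your proof is correct, and for the equivalence (i)$\iff$(ii) it takes a genuinely different route from the paper: the paper simply cites the Blaschke-product literature (\cite{DGSSV}) for that equivalence and only supplies the step (ii)$\iff$(iii) via Theorem \ref{newdecomp}, exactly as you do. Your replacement for the citation is a self-contained geometric argument whose engine is the identity $(a-P)\overline{(a-Q)}=|a|^2-1$ for the two $\bbT$-preimages $P,Q$ of a point under $B_2(z)=z(z-a)/(1-\bar a z)$: this shows that the fibers of $B_2$ over $\bbT$ are precisely the endpoint-pairs of chords through $a$, which (together with the monotonicity of $\arg B_2(e^{i\theta})$ forcing the pairing to be by opposite vertices) gives (ii)$\Rightarrow$(i) at once, and reduces (i)$\Rightarrow$(ii) to promoting the boundary identity ``$B_6$ is constant on $B_2$-fibers over $\bbT$'' to the factorization $B_6=B_3\circ B_2$. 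Your symmetric-function device --- expressing $(B_6(\zeta_1(w))-B_6(\zeta_2(w)))^2$ as a rational function of $w$ via the elementary symmetric functions $\zeta_1+\zeta_2=a-\bar a w$, $\zeta_1\zeta_2=-w$, and killing it by analytic continuation from $\bbT$ --- correctly handles the real obstruction you identify, namely that the involution $\sigma$ is not the boundary trace of a holomorphic map. What each approach buys: the paper's is one line long and leans on known results; yours is longer but self-contained and makes the identification of the Brianchon point with the zero of $B_2(z)/z$ (hence, via the last clause of Theorem \ref{newdecomp}, with the zero of $\Phi_1(z;\alpha_0)$) explicit rather than inherited from the reference. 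One cosmetic slip: you overload $\lambda$, defining it as $B_6(P_j)$ but then using it for the common value $B_2(P)=B_2(Q)$ of an opposite pair in the formulas $P+Q=a-\bar a\lambda$, $PQ=-\lambda$; the computation is correct once $\lambda$ there is read as that $B_2$-value.
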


The equivalence of (i) and (ii) in both theorems is proven in \cite{DGSSV}.  The equivalence of (ii) and (iii) in both theorems follows from Theorem \ref{newdecomp}.  By comparing Theorem \ref{brianch6} with Theorem \ref{3by3thm} (and the remark after it), one arrives at the following result.
	
	\begin{corollary}\label{46relation}
		Let $A\in S_5$ have eigenvalues $\{f_j\}_{j=1}^5$.  Suppose the Brianchon curve of $A$ is the point $f_5$.  Then $\{f_j\}_{j=1}^4$ can be labelled in such a way that both of the following conditions hold:
		\begin{enumerate}
			\item[\rm{(i)}] $f_5$ is the pentagram point of the Poncelet $4$-ellipse with foci at $f_1$ and $f_4$;
			\item[\rm{(ii)}] $f_5$ is the pentagram point of the Poncelet $4$-ellipse with foci at $f_2$ and $f_3$.
		\end{enumerate}
	\end{corollary}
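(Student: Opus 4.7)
The plan is to combine Theorem \ref{brianch6} with Theorem \ref{3by3thm}, using the bijection between degree-$2$ monic polynomials with zeros in $\bbD$ and pairs of Verblunsky coefficients to line up two different factorizations.

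First, apply Theorem \ref{brianch6}(iii) to $A$: since the Brianchon curve is a single point, there exist $\alpha_0,\alpha_1,\gamma_1\in\bbD$ with
\[
\Phi_5(z)=\det(zI-A)=\Phi_1(z;\alpha_0)\,\Phi_2(z;\alpha_0,\alpha_1)\,\Phi_2(z;\alpha_0,\gamma_1).
\]
The unique zero of $\Phi_1(z;\alpha_0)$ is $\overline{\alpha_0}$ (by Szeg\H{o} recursion from $\Phi_0\equiv 1$), and this zero is exactly the Brianchon point $f_5$, so $\alpha_0=\overline{f_5}$. Label the remaining four eigenvalues so that $\{f_1,f_4\}$ is the zero set of $\Phi_2(z;\alpha_0,\alpha_1)$ and $\{f_2,f_3\}$ is the zero set of $\Phi_2(z;\alpha_0,\gamma_1)$.

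Next, handle (i). By Theorem \ref{Shukla} there is a matrix $A_{14}\in S_3$ whose numerical range is the Poncelet $4$-ellipse with foci $f_1,f_4$. Because this numerical range is an elliptical disk, Theorem \ref{3by3thm} produces $a,b\in\bbD$ with
\[
\det(zI-A_{14})=\Phi_1(z;a)\,\Phi_2(z;a,b),
\]
the zeros of $\Phi_2(z;a,b)$ being $f_1,f_4$, and the pentagram point of the ellipse being $\overline{a}$. Since $(z-f_1)(z-f_4)$ is the unique monic polynomial with zeros $f_1,f_4$, we have $\Phi_2(z;a,b)=\Phi_2(z;\alpha_0,\alpha_1)$; the injectivity of the Verblunsky correspondence on degree-$2$ polynomials (item (iv) of the bijection in Section~\ref{sec:background}) forces $a=\alpha_0$ and $b=\alpha_1$. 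Hence the pentagram point is $\overline{a}=\overline{\alpha_0}=f_5$, establishing (i). The argument for (ii) is identical, replacing $\Phi_2(z;\alpha_0,\alpha_1)$ by $\Phi_2(z;\alpha_0,\gamma_1)$ and using the matrix in $S_3$ whose numerical range is the Poncelet $4$-ellipse with foci $f_2,f_3$.

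The only subtlety is the uniqueness step $a=\alpha_0$; once that is granted, the corollary is a direct reading of how the two factorizations overlap. The content of the result is really that \emph{both} degree-$2$ factors in the Brianchon factorization share the initial Verblunsky coefficient $\alpha_0=\overline{f_5}$, and this common initial coefficient is precisely what Theorem \ref{3by3thm} identifies (after conjugation) as the pentagram point of the associated Poncelet $4$-ellipse.
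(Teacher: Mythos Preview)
Your argument is correct and is exactly the comparison the paper has in mind: the factorization from Theorem~\ref{brianch6} makes each degree-$2$ factor a $\Phi_2(z;\alpha_0,\cdot)$ with $\overline{\alpha_0}=f_5$, and the invertibility of the Szeg\H{o} recursion forces the $\Phi_2(z;a,b)$ appearing in Theorem~\ref{3by3thm} (for the Poncelet $4$-ellipse with foci equal to the zeros of that factor) to have the same initial coefficient $a=\alpha_0$, so its pentagram point $\overline{a}$ equals $f_5$. This is precisely the content of the paper's one-line justification.
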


\medskip

Using ideas from \cite{HMFPS}, we can prove the following.

\begin{theorem}\label{ponc6}
	If we retain the notation from Theorem \ref{pent6}, then the following are equivalent
	\begin{enumerate}
		\item[\rm{(i)}] The Poncelet curve associated with $\mcg^{(5)}$ is an ellipse.
		\item[\rm{(ii)}]There exist regular Blaschke products $\{B_j\}_{j=2}^3$ with $\deg(B_j)=j$ and regular Blaschke products $\{C_j\}_{j=2}^3$ with $\deg(C_j)=j$ such that
		\[
		\frac{z\Phi_5(z)}{\Phi_5^*(z)}=B_3(B_2(z))=C_2(C_3(z)).
		\]
		\item[\rm{(iii)}] The pentagram curve of $\mcg^{(5)}$ is an ellipse and the Brianchon curve of $\mcg^{(5)}$ is a single point.
	\end{enumerate}
\end{theorem}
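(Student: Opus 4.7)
The plan is to prove the three equivalences in turn.

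The equivalence (ii)$\Leftrightarrow$(iii) is immediate from Theorems \ref{pent6} and \ref{brianch6}: the second equality $\frac{z\Phi_5(z)}{\Phi_5^*(z)} = C_2(C_3(z))$ in (ii) is (by Theorem \ref{pent6}) equivalent to the pentagram curve being an ellipse, while the first equality $\frac{z\Phi_5(z)}{\Phi_5^*(z)} = B_3(B_2(z))$ is (by Theorem \ref{brianch6}) equivalent to the Brianchon curve being a single point.  Hence (ii) asserts exactly the conjunction of these two, which is (iii).

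For (i)$\Rightarrow$(iii), I would invoke Darboux's theorem (cited in Section \ref{sec:background}): if $C_1$ is an ellipse, then every $C_j$ is a (possibly degenerate) ellipse, so in particular the pentagram curve $C_2$ is an ellipse.  For the Brianchon curve to be a single point, note that each hexagon in the Poncelet family circumscribes the conic $C_1$, so the classical Brianchon theorem forces its three main diagonals to concur at a point.  That this point is the same for every hexagon in the family (so $C_3$ degenerates to a single point) is the $n=6$ manifestation of Darboux's theorem applied to the innermost component.

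The main obstacle is (iii)$\Rightarrow$(i).  Starting from Theorems \ref{pent6}(iii) and \ref{brianch6}(iii), condition (iii) furnishes two simultaneous factorizations of $\Phi_5$:
\[
\Phi_5(z) = \Phi_2(z;\alpha_0,\alpha_1)\,\Phi_3(z;\alpha_0,\alpha_1,\alpha_2) = \Phi_1(z;\beta_0)\,\Phi_2(z;\beta_0,\beta_1)\,\Phi_2(z;\beta_0,\gamma_1).
\]
The plan is to compare these: since $\overline{\beta_0}$ is a root of the right-hand factorization, it must equal a root of $\Phi_2(z;\alpha_0,\alpha_1)$ or of $\Phi_3(z;\alpha_0,\alpha_1,\alpha_2)$, and case-by-case the invertibility of the Szeg\H{o} recursion pins down additional algebraic relations among the Verblunsky coefficients $\alpha_i,\beta_j,\gamma_k$.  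One then appeals to a characterization from \cite{HMFPS} of elliptical numerical ranges for matrices in $S_5$---essentially that the existence of both an ``outer-degree-$3$'' and an ``outer-degree-$2$'' Blaschke composition for $B_A$ forces $W(\mcg^{(5)})$ to be bounded by a conic---to conclude that $C_1$ is an ellipse.  The most delicate step is this final translation from algebra to geometry, extending the Fujimura-type criterion from the $n=4$ setting (where a single degree-$2\circ 2$ decomposition suffices) to the $n=6$ setting, where two compatible decompositions are needed.
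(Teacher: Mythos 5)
Your treatment of (ii)$\Leftrightarrow$(iii) and of (i)$\Rightarrow$(iii) matches the paper: the former is immediate from Theorems \ref{pent6} and \ref{brianch6} (and you have correctly matched $B_3(B_2(z))$ with the Brianchon point and $C_2(C_3(z))$ with the pentagram ellipse), and the latter is exactly Darboux's theorem as cited in \cite[Theorem B]{HMFPS}; your supplementary remarks about the classical Brianchon concurrence are harmless but not needed.

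The genuine gap is in (iii)$\Rightarrow$(i). You propose to compare the two OPUC factorizations of $\Phi_5$ and then to ``appeal to a characterization from \cite{HMFPS}'' asserting that the existence of both an outer-degree-$3$ and an outer-degree-$2$ Blaschke decomposition forces $W(\mcg^{(5)})$ to be an elliptical disk. No such characterization is available to cite --- that assertion is precisely the implication (ii)$\Rightarrow$(i) you are trying to prove, so as written the step is circular; and the proposed comparison of Verblunsky coefficients is neither carried out nor does it obviously lead anywhere. The paper closes this implication with a degree count on the dual (Kippenhahn) curve: by \cite[Section 3]{HMFPS}, a component $C_j$ is an algebraic curve of degree $1$ or $2$ if and only if the corresponding component of the dual curve is (a single point counting as degree $1$), and the dual curve of $\mcg^{(5)}$ has total degree $5$ (see \cite[Section 3]{Fuj18}). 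Under (iii) the Brianchon component accounts for degree $1$ and the pentagram component for degree $2$, so the remaining component, namely the Poncelet curve $C_1$, has degree $5-1-2=2$; being a bounded convex curve in $\bbD$ of degree $2$, it is an ellipse. You need to replace your final step with this (or an equivalent) argument.
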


\begin{figure}[htb]
	\begin{center}
	\includegraphics[width=0.35\linewidth]{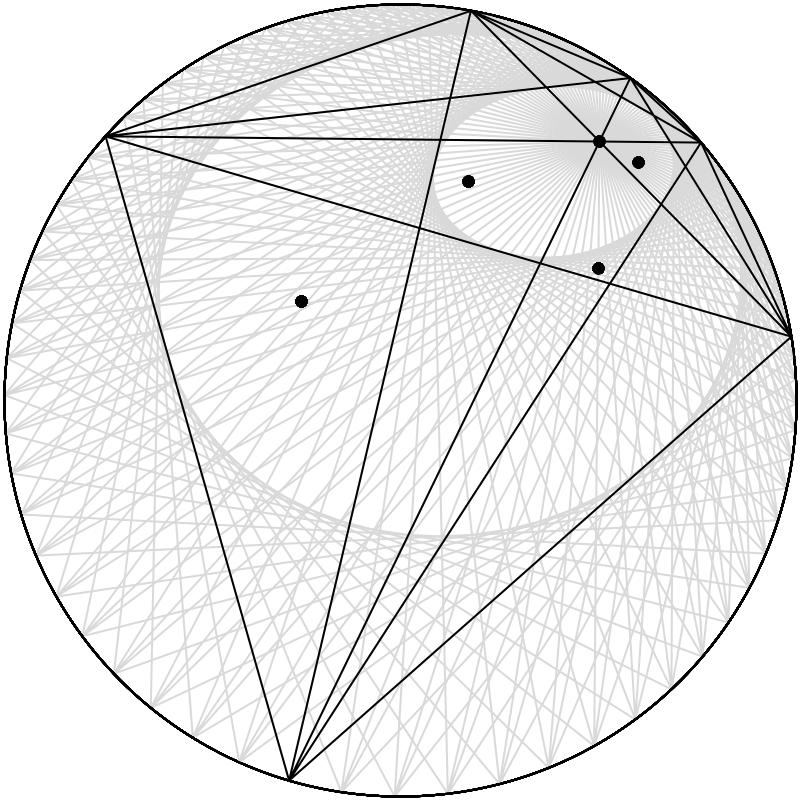}
	\caption{A Poncelet 6-ellipse along with the pentagram ellipse and Brianchon point.}
	\end{center}
	\label{fig:Poncelet6Ellpse}
\end{figure}

\begin{proof}
	The equivalence of (ii) and (iii) is an immediate consequence of Theorems \ref{pent6} and \ref{brianch6}.  The fact that (i) implies (iii) is a result of Darboux (see \cite[Theorem B]{HMFPS}).  To see that (iii) implies (i), we will make use of the dual curve described in Section \ref{sec:background}.  Notice that \cite[Section 3]{HMFPS} implies that if $C_1$, $C_2$, or $C_3$ is an algebraic curve of degree $1$ or $2$, then the same is true of the corresponding component of the dual curve and vice versa (counting a single point as having degree $1$).  The dual curve in this case has degree $5$ (see \cite[Section 3]{Fuj18}).  Thus, if the Brianchon curve has degree $1$ and the pentagram curve has degree $2$, then the Poncelet curve has degree $2$, which means it is an ellipse.
\end{proof}

Theorem \ref{brianch6} reveals an interesting phenomenon.  Suppose we are given an $A\in S_5$ whose Brianchon curve is a point and whose pentagram curve is not an ellipse (it is easy to find such $A$).  Take a rank one unitary dilation $U_1$ of $A$ and look at the hexagon with vertices at the eigenvalues of $U_1$.  The diagonals of this hexagon meet in a single point (which is the Brianchon curve of $A$), so by Brianchon's Theorem there exists an ellipse $E_1$ inscribed in this hexagon.  By construction, the ellipse $E_1$ is a Poncelet $6$-ellipse, so there is in fact an infinite family of hexagons $\{H_{\lambda}\}$ inscribed in $\partial\mathbb D$ and circumscribed about $E_1$.  On the other hand, one can look at any other rank one unitary dilation of $A$ and repeat this process.  This gives a second infinite family of hexagons, each of which is circumscribed in $\partial\mathbb D$ and has its diagonals meeting at the point that is the Brianchon curve of $A$.  But these two families of hexagons cannot be the same, for that would imply that the numerical range of $A$ is bounded by an ellipse and we know it is not.

\begin{figure}[htb] 
	\begin{center}
		\begin{tabular}{c p{10mm}c}		
			\includegraphics[width=0.30\linewidth]{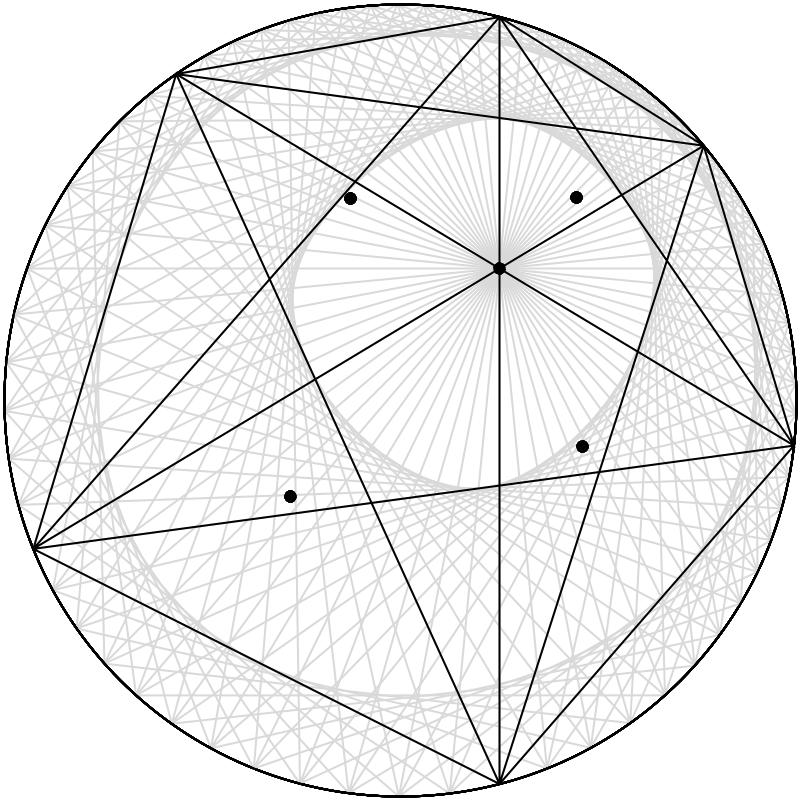} 
			& \ 	& \includegraphics[width=0.30\linewidth]{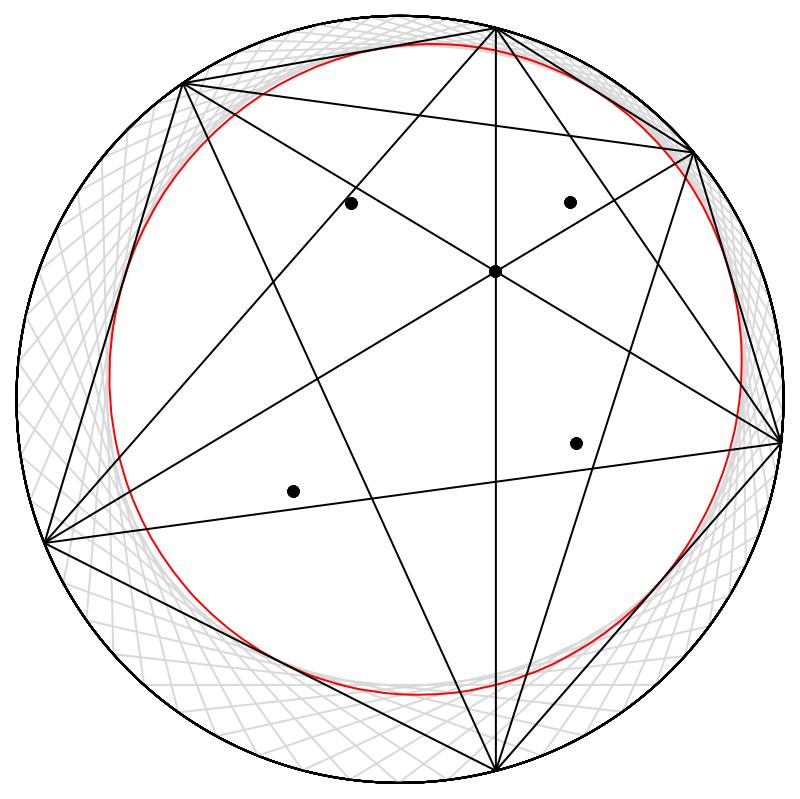}
	\end{tabular}
	\end{center}
	\caption{A $6$-Poncelet curve that is not an ellipse having the property that its Brianchon curve is a single point (Brianchon point). The picture on the right shows the ellipse that is inscribed in one of the Poncelet hexagons. This ellipse exists by Brianchon's
		theorem. Notice that the $6$-Poncelet curve contains points in the exterior as well as points in the interior of this ellipse.}
	\label{fig:Brianchonpoint}
\end{figure}

The next step in our analysis will be very much analogous to that performed in Section \ref{sec:quadrilateral}.  If we are given $\{f_1,f_2\}\in\bbD^2$, we want to find $A\in S_5$ whose numerical range is bounded by an ellipse with foci at $\{f_1,f_2\}$.  Theorem \ref{Shukla} tells us that such an ellipse exists and is the unique Poncelet $5$-ellipse with foci at $f_1$ and $f_2$.  We will find a cutoff CMV matrix $\mcg^{(5)}$ whose numerical range is bounded by an ellipse with foci at $\{f_1,f_2\}$, whose pentagram curve is an ellipse (whose foci will be called $\{f_3,f_4\}$), and whose Brianchon curve is a single point (that we will call $f_5$).

\begin{theorem}\label{5by5thm}
A matrix $A\in S_5$ with eigenvalues $\{f_j\}_{j=1}^5$ satisfies all of the following conditions
\begin{enumerate}
\item[\rm{(i)}] $W(A)$ is an elliptical disk
\item[\rm{(ii)}] the foci of $\partial W(A)$ are $\{f_1,f_2\}$
\item[\rm{(iii)}] the foci of the pentagram curve are $\{f_3,f_4\}$
\item[\rm{(iv)}] the Brianchon curve is the single point $f_5$
\end{enumerate}
if and only if the complex numbers $\{f_j\}_{j=1}^5$ satisfy
\begin{align}\label{f34}
	\{f_3,f_4\}=\left\{\frac{f_5-f_2}{1-f_2\bar{f}_5},\frac{f_5-f_1}{1-f_1\bar{f}_5}\right\}
	\end{align}
as sets and
\begin{equation}\label{f12345}
	\begin{aligned}
	f_3+f_4+f_1f_2f_5\bar{f}_4\bar{f}_3&=f_1+f_2+f_5\\
	f_3f_4+f_1f_2f_5(\bar{f}_4+\bar{f}_3)&=f_1f_2+f_2f_5+f_1f_5
	\end{aligned}
	\end{equation}
\end{theorem}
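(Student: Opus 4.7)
The strategy exploits the two distinct factorizations of $\Phi_5$ provided by Theorem \ref{pent6} (from hypothesis (iii)), which expresses $\Phi_5$ as a degree-$2$ polynomial times the degree-$3$ polynomial obtained from it by one Szeg\H{o} step, and by Theorem \ref{brianch6} (from hypothesis (iv)), which expresses $\Phi_5$ as a degree-$1$ polynomial times two degree-$2$ CMV-compatible blocks. Theorem \ref{ponc6} couples (iii) and (iv) with (i). Throughout, write $M(z):=(f_5-z)/(1-\bar f_5 z)$ for the M\"obius involution of $\bbD$ exchanging $0$ and $f_5$.

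For the forward direction, assume (i)--(iv). By (iii) and Theorem \ref{pent6} there exist $\alpha_0,\alpha_1,\alpha_2\in\bbD$ with $\Phi_5(z)=\Phi_2(z;\alpha_0,\alpha_1)\,\Phi_3(z;\alpha_0,\alpha_1,\alpha_2)$ and the degree-$2$ factor having roots $\{f_3,f_4\}$; the remaining roots of $\Phi_5$ force $\Phi_3(z;\alpha_0,\alpha_1,\alpha_2)=(z-f_1)(z-f_2)(z-f_5)$. Expanding the Szeg\H{o} recursion $\Phi_3=z\Phi_2-\bar\alpha_2\Phi_2^*$ and matching the coefficients of $z^0, z^1, z^2$ against this product gives $\bar\alpha_2=f_1f_2f_5$ along with the two equations in \eqref{f12345}. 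For \eqref{f34}, Theorem \ref{brianch6} applied to (iv) gives $\Phi_5(z)=\Phi_1(z;\bar f_5)\,\Phi_2(z;\bar f_5,\beta_1)\,\Phi_2(z;\bar f_5,\gamma_1)$; a direct check shows the two roots $w,w'$ of any $\Phi_2(z;\bar f_5,\cdot)$ satisfy $w+w'-\bar f_5 ww'=f_5$, equivalently $w'=M(w)$, so $\{f_1,f_2,f_3,f_4\}$ partitions into two $M$-pairs. The parallel pairing $\{f_1,f_2\}\cup\{f_3,f_4\}$ would require each of $\{f_1,f_2\}$ and $\{f_3,f_4\}$ to be an $M$-pair, but this double degeneracy combined with \eqref{f12345} forces $\{f_3,f_4\}=\{f_1,f_2\}$ (a short algebraic calculation eliminating $f_5$ yields $|f_5|=1$ otherwise), so the pairing is mixed: $\{M(f_1),M(f_2)\}=\{f_3,f_4\}$, which is \eqref{f34} (the degenerate case also satisfies \eqref{f34} via set equality).

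Conversely, assume \eqref{f34} and \eqref{f12345}. Let $Q_2(z):=(z-f_3)(z-f_4)$ and $Q_3(z):=(z-f_1)(z-f_2)(z-f_5)$; the identities \eqref{f12345} are precisely the coefficient conditions for $Q_3(z)=zQ_2(z)-\overline{f_1f_2f_5}\,Q_2^*(z)$, so $Q_3$ arises from $Q_2$ by one Szeg\H{o} step. Via the bijection between monic degree-$5$ polynomials with zeros in $\bbD$ and cut-off CMV matrices, $Q_2\cdot Q_3=\prod_{j=1}^5(z-f_j)$ is the characteristic polynomial of a unique $A\in S_5$, and Theorem \ref{pent6} ensures that the pentagram curve of $A$ is an ellipse with foci $\{f_3,f_4\}$, establishing (iii). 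Equation \eqref{f34} supplies a mixed $M$-pair partition of $\{f_1,f_2,f_3,f_4\}$, producing the Brianchon factorization of Theorem \ref{brianch6} and hence (iv). By Theorem \ref{ponc6}, (iii) and (iv) upgrade to (i); and since $\{f_3,f_4,f_5\}$ are accounted for by the pentagram foci and Brianchon point, the remaining two eigenvalues $\{f_1,f_2\}$ are forced to be the foci of $\partial W(A)$, which is (ii). The main obstacle is identifying the correct pairing in the Brianchon factorization: among the three possible partitions of $\{f_1,f_2,f_3,f_4\}$ into two pairs, the M\"obius-involution condition combined with \eqref{f12345} singles out the mixed pairings, so that \eqref{f34} emerges as both necessary and sufficient.
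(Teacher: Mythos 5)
Your overall architecture is the same as the paper's: derive \eqref{f12345} by matching coefficients in the Szeg\H{o}-step identity $zQ_2-f_1f_2f_5Q_2^*=(z-f_1)(z-f_2)(z-f_5)$ coming from Theorem \ref{pent6}, derive \eqref{f34} from the Brianchon factorization of Theorem \ref{brianch6} after observing that each factor $\Phi_2(z;\bar f_5,\cdot)$ has roots paired by the involution $M(z)=(f_5-z)/(1-\bar f_5 z)$, and run Theorem \ref{ponc6} plus an elimination argument (Mirman's partition of the eigenvalues into Poncelet foci, pentagram foci, and Brianchon point) for the converse. Two small slips: in the converse you write the Szeg\H{o} multiplier as $\overline{f_1f_2f_5}$ when it should be $f_1f_2f_5$ (you had $\bar\alpha_2=f_1f_2f_5$ correctly in the forward direction), and you should say a word about why the foci of $\partial W(A)$ are eigenvalues so that the elimination argument closes.

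The genuine gap is the exclusion of the ``parallel'' pairing $\{f_1,f_2\}\mid\{f_3,f_4\}$ in the Brianchon factorization. You correctly identify this as the main obstacle, but your resolution rests on the unverified assertion that the parallel pairing together with \eqref{f12345} forces either $\{f_3,f_4\}=\{f_1,f_2\}$ or $|f_5|=1$ via ``a short algebraic calculation eliminating $f_5$.'' That calculation is not exhibited and its outcome is not evident; this is exactly the point to which the paper devotes a separate lemma (Lemma \ref{5zeros}), whose proof is not a direct elimination but an appeal to the Mirman system in two different guises: if $f_1,f_2$ were roots of the same quadratic factor, then $\{f_1,f_2,f_5\}$ would be the spectrum of a matrix in $S_3$ with elliptic numerical range, so the $n=4$ Mirman relation gives $f_1f_2=B_2(f_5;f_1,f_2)$, while the $n=6$ Mirman relation gives $f_3f_4=B_2(f_5;f_1,f_2)$; hence $f_1f_2=f_3f_4$, the two quadratic factors share both Verblunsky coefficients and therefore coincide, contradicting the assumed parallel splitting. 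Until you either carry out your elimination honestly or substitute an argument of this kind, the necessity of \eqref{f34} is not established.
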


Recall that Theorem \ref{brianch6} and Theorem \ref{ponc6} show that if $W(A)$ is bounded by an ellipse, then the characteristic polynomial factors in a certain way (in terms of OPUC) and the degree $1$ polynomial in that factorization vanishes at the Brianchon point.  We require the following refinement of that result, which tells us about the zeros of the remaining polynomials in that factorization.

\begin{lemma}\label{5zeros}
Suppose $A\in S_5$ is such that $W(A)$ is an elliptical disk and the eigenvalues of $A$ are $\{f_j\}_{j=1}^5$.  Suppose the foci of $\partial W(A)$ are $\{f_1,f_2\}$, the foci of the pentagram curve are $\{f_3,f_4\}$, and the Brianchon point is $f_5$.  Write
\[
\det(zI_5-A)=\Phi_1(z;\alpha_0)\Phi_2(z;\alpha_0,\alpha_1)\Phi_2(z;\alpha_0,\gamma_1)
\]
as in Theorem \ref{brianch6}.  If $\Phi_2(f_1;\alpha_0,\alpha_1)=0$, then $\Phi_2(f_2;\alpha_0,\gamma_1)=0$.
\end{lemma}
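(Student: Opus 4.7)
The plan is a proof by contradiction. Assume that $\Phi_2(f_1;\alpha_0,\alpha_1)=0$ while $\Phi_2(f_2;\alpha_0,\gamma_1)\neq 0$. Since $\Phi_1(z;\alpha_0)$ vanishes only at $f_5$ and the five zeros of the factorization $\Phi_5(z)=\Phi_1(z;\alpha_0)\Phi_2(z;\alpha_0,\alpha_1)\Phi_2(z;\alpha_0,\gamma_1)$ are $\{f_1,\ldots,f_5\}$, this assumption forces the zero sets of the two quadratic factors in the Brianchon factorization to be $\{f_1,f_2\}$ and $\{f_3,f_4\}$ respectively (in the generic case of distinct eigenvalues; the degenerate case is handled separately at the end). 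The goal is to show that this particular pairing is incompatible with the pentagram factorization of Theorem~\ref{pent6}.

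First I would exploit the fact that both Brianchon quadratics share the first Verblunsky coefficient $\alpha_0=\bar f_5$. Expanding $\Phi_2(z;\alpha_0,\alpha_1)=z\Phi_1(z;\alpha_0)-\bar\alpha_1\Phi_1^*(z;\alpha_0)$ via Szeg\H{o} and identifying it with $(z-f_1)(z-f_2)$, and similarly for the other factor, produces the two identities
\[
f_1+f_2-f_5=\bar f_5 f_1 f_2, \qquad f_3+f_4-f_5=\bar f_5 f_3 f_4.
\]
Next, Theorem~\ref{pent6} gives a factorization $\Phi_5=\Phi_2(z;\tilde\alpha_0,\tilde\alpha_1)\Phi_3(z;\tilde\alpha_0,\tilde\alpha_1,\tilde\alpha_2)$ in which the quadratic factor vanishes at $\{f_3,f_4\}$ and the cubic at $\{f_1,f_2,f_5\}$. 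Applying Szeg\H{o} to express $\Phi_3$ in terms of $\Phi_2$ and $\Phi_2^*$ and matching coefficients, the $z^0$ coefficient identifies $\bar{\tilde\alpha}_2=f_1 f_2 f_5$, and the $z^2$ coefficient yields
\[
f_1+f_2+f_5=f_3+f_4+f_1 f_2 f_5 \bar f_3 \bar f_4.
\]

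Substituting the two Brianchon identities into this pentagram identity collapses everything into the single equation $f_5(1-f_1 f_2 \bar f_3 \bar f_4)=\bar f_5(f_3 f_4 - f_1 f_2)$. For $f_5\neq 0$ I would rearrange this to
\[
\frac{f_5}{\bar f_5}=\frac{f_3 f_4 - f_1 f_2}{1-f_1 f_2 \bar f_3 \bar f_4},
\]
whose left side has modulus exactly $1$. Setting $p=f_1 f_2$ and $q=f_3 f_4$, both strictly inside $\bbD$, the standard Blaschke-type identity $|1-p\bar q|^2-|q-p|^2=(1-|p|^2)(1-|q|^2)>0$ forces the right side to have modulus strictly less than $1$, producing the desired contradiction. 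The step I expect to require the most care is the degenerate case $f_5=0$: here the two Szeg\H{o} identities become $f_2=-f_1$ and $f_4=-f_3$, and matching the $z^1$ coefficient in the pentagram Szeg\H{o} relation forces $f_1^2=f_3^2$, so $\{f_3,f_4\}=\{f_1,f_2\}$; in that situation both Brianchon $\Phi_2$ factors coincide and the conclusion of the lemma holds trivially. The main bookkeeping hurdle throughout will be keeping the two parallel systems of Verblunsky coefficients (the Brianchon $(\alpha_0,\alpha_1,\gamma_1)$ versus the pentagram $(\tilde\alpha_0,\tilde\alpha_1,\tilde\alpha_2)$) clearly separated.
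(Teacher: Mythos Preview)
Your argument is correct, but it takes a genuinely different route from the paper's proof. Both begin by contradiction and reach the same intermediate configuration: under the hypothesis, $\Phi_2(z;\alpha_0,\alpha_1)=(z-f_1)(z-f_2)$ and $\Phi_2(z;\alpha_0,\gamma_1)=(z-f_3)(z-f_4)$. From there the paper invokes the Mirman system twice. First, since $(z-f_5)(z-f_1)(z-f_2)=\Phi_1(z;\alpha_0)\Phi_2(z;\alpha_0,\alpha_1)$, Theorem~\ref{3by3thm} applies to the $3\times 3$ matrix with eigenvalues $\{f_1,f_2,f_5\}$, and the $n=4$ Mirman relation gives $f_1f_2=B_2(f_5;f_1,f_2)$. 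Second, the $n=6$ Mirman relation for $A$ itself gives $f_3f_4=B_2(f_5;f_1,f_2)$. Hence $f_1f_2=f_3f_4$, so $\alpha_1=\gamma_1$ and the two quadratic factors coincide, contradicting $\Phi_2(f_2;\alpha_0,\gamma_1)\neq 0$.

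Your route instead extracts coefficient identities directly from the Szeg\H{o} recursion in the two factorizations and reaches the contradiction via the modulus inequality $|q-p|<|1-p\bar q|$ for $p,q\in\bbD$, together with a separate treatment of $f_5=0$. The trade-off: the paper's proof is shorter once Theorem~\ref{3by3thm} and Theorem~\ref{MirmanThm} are in hand, and it yields the sharper intermediate conclusion $f_1f_2=f_3f_4$ (hence $\alpha_1=\gamma_1$) rather than merely a contradiction. Your approach is more self-contained---it uses nothing beyond the Szeg\H{o} recursion, Theorems~\ref{pent6} and \ref{brianch6}, and an elementary inequality---and in particular sidesteps any question about which labeling of the eigenvalues the $n=6$ Mirman relations assume. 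One minor remark: your parenthetical about ``distinct eigenvalues'' is unnecessary, since the opening multiset argument (forcing $\{f_1,f_2\}$ and $\{f_3,f_4\}$ onto the two quadratic factors) goes through verbatim with multiplicities; the only genuine case split you need is $f_5\neq 0$ versus $f_5=0$, which you do handle.
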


\begin{proof}
Suppose $\Phi_2(f_1;\alpha_0,\alpha_1)=0$ and $\Phi_2(f_2;\alpha_0,\gamma_1)\neq0$.  Then $\Phi_2(f_2;\alpha_0,\alpha_1)=0$ and hence $\Phi_2(f_3;\alpha_0,\gamma_1)=\Phi_2(f_4;\alpha_0,\gamma_1)=0$. 

Since $\Phi_2(f_2;\alpha_0,\alpha_1)=0$, we can write
\[
(z-f_5)(z-f_1)(z-f_2)=\Phi_1(z;\alpha_0)\Phi_2(z;\alpha_0,\alpha_1).
\]
This means $\{f_1,f_2,f_5\}$ are the eigenvalues of some $A\in S_3$ that satisfies the hypotheses of Theorem \ref{3by3thm}.  Applying the Mirman system in the $N=4$ case shows
\[
f_1f_2=B_2(f_5;f_1,f_2) 
\]
The Mirman system in the case $n=6$ shows  shows $f_3f_4=B_2(f_5;f_1,f_2)$ and hence $f_1f_2=f_3f_4$.  Since $f_3,f_4$ are the zeros of $\Phi_2(z;\alpha_0,\gamma_1)$ and $\gamma_1=-\overline{\Phi_2(0;\alpha_0,\gamma_1)}$, we conclude that $\gamma_1=\alpha_1$, which implies $\Phi_2(z;\alpha_0,\gamma_1)=\Phi_2(z;\alpha_0,\alpha_1)$.  It follows that $\Phi_2(f_2;\alpha_0,\gamma_1)=0$, which gives us a contradiction.
\end{proof}

\textit{Proof of Theorem \ref{5by5thm}}
In Theorem \ref{pent6} it is stated that the foci of the pentagram ellipse will be the zeros of $\Phi_2(z;\alpha_0,\alpha_1)$.  Thus, the foci of the Poncelet curve and the Brianchon point must be the zeros of $\Phi_3(z;\alpha_0,\alpha_1,\alpha_2)$.  The product of these zeros is then $\bar{\alpha}_2$ and hence we have
	\begin{equation}\label{3rel}
	z(z-f_3)(z-f_4)-f_1f_2f_5(1-\bar{f_3}z)(1-\bar{f}_4z)=(z-f_1)(z-f_2)(z-f_5)
	\end{equation}
Equating coefficients of $z$ and $z^2$  in \eqref{3rel} tells us that \eqref{f12345} must hold.  One can perform a similar calculation invoking Theorem \ref{brianch6}.  By equating coefficients of the appropriate polynomials, we find that \eqref{f34} must hold.

For the converse statement, the above calculations show that if \eqref{f34} and \eqref{f12345} hold, then the conditions (iii) in Theorems \ref{pent6} and \ref{brianch6} are satisfied (by equating coefficients of polynomials).  Theorem \ref{ponc6} then implies that the cutoff CMV matrix $\mcg^{(5)}$ with eigenvalues $\{f_j\}_{j=1}^5$ has numerical range that is bounded by an ellipse, has pentagram curve that is an ellipse with foci $\{f_3,f_4\}$, and has Brianchon point $\{f_5\}$.  The foci of $\partial W(\mcg^{(5)})$ are eigenvalues of $\mcg^{(5)}$ (see \cite[Section 5]{HMFPS}).  By \cite[Corollary 4]{Mirman1}, we know that one can partition the eigenvalues of $\mcg^{(5)}$ into the Brianchon point, the foci of the pentagram curve, and the foci of the boundary of the numerical range.  Thus, by elimination it must be that the foci of $\partial W(\mcg^{(5)})$ are $\{f_1,f_2\}$.
\begin{flushright}$\square$\end{flushright}
 
Recall that the Mirman system in the $N=6$ case is
\begin{equation}\label{Mir6}
	f_1f_5=B_2(f_3;f_1,f_2),\qquad\qquad f_3f_4=B_2(f_5;f_1,f_2),\qquad\qquad f_2f_5=B_2(f_4;f_1,f_2),
\end{equation}
The conditions \eqref{f34} and \eqref{f12345} allow us to recover these relations.  Substitute $z$ for $f_3$ in \eqref{3rel} to obtain
\begin{equation}\label{Mir6Der1}
(f_3-f_1)(f_3-f_2)=\frac{f_1f_2f_5(1-|f_3|^2)(1-f_3\bar{f}_4)}{f_5-f_3}
\end{equation}
If we replace $z$ by $f_3$ in the reversed polynomials from \eqref{3rel}, then we obtain
\begin{equation}\label{Mir6Der2}
(1-\bar{f}_1f_3)(1-\bar{f}_2f_3)=\frac{(1-|f_3|^2)(1-f_3\bar{f}_4)}{1-\bar{f}_5f_3}
\end{equation}
If we divide \eqref{Mir6Der1} by \eqref{Mir6Der2}, and use \eqref{f34}, we find $B_2(f_3;f_1,f_2)=f_1f_5$.  Similar reasoning can be used to derive $B_2(f_4;f_1,f_2)=f_2f_5$.  Replacing $z$ by $f_5$ in \eqref{3rel} tells us that
	\[
	\frac{f_5(f_5-f_4)(f_5-f_3)}{(1-\bar{f}_4f_5)(1-\bar{f}_3f_5)}=f_1f_2f_5
	\]
	If we assume $f_1f_2f_5\neq0$ and we substitute the relations (\ref{f34}) for $f_3$ and $f_4$ (for an appropriate choice of which to call $f_3$ and which to call $f_4$), then we find
	\[
	(1-\bar{f}_2f_5)(1-\bar{f}_1f_5)=(1-\bar{f}_5f_2)(1-\bar{f}_5f_1)
	\]
	In other words $(1-\bar{f}_2f_5)(1-\bar{f}_1f_5)\in\bbR$.  If we use this fact, then multiplying the expressions in (\ref{f34}) gives $B_2(f_5;f_1,f_2)=f_3f_4$.
	
\medskip

If one is given $(f_1,f_2)\in\bbD^2$, the construction above shows how to find an $A\in S_5$ so that $\partial W(A)$ is an ellipse with foci at $f_1$ and $f_2$.  Our next result shows that one can similarly find $A\in S_5$ with $\partial W(A)$ an ellipse and whose pentagram curve has prescribed foci (Theorem \ref{ponc6} assures us that if the Poncelet curve of $A$ is an ellipse, then so is its pentagram curve).

\begin{theorem}\label{pentfix}
Given $(f_3,f_4)\in\bbD^2$, there exists a unique (up to unitary conjugation) $A\in S_5$ so that $\partial W(A)$ is an ellipse and the pentagram curve of $A$ is an ellipse with foci at $f_3$ and $f_4$.
\end{theorem}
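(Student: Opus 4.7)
The strategy is to use the characterization of eigenvalues in Theorem \ref{5by5thm} and show that prescribing the pentagram foci $(f_3,f_4)$ pins down the remaining eigenvalues uniquely. Any $A\in S_5$ satisfying the conclusion of Theorem \ref{pentfix} has eigenvalues that can be labelled $\{f_1,f_2,f_3,f_4,f_5\}$ with $f_1,f_2$ the foci of $\partial W(A)$ and $f_5$ the Brianchon point, subject to the set relation \eqref{f34} and the polynomial identities \eqref{f12345}. So the proof reduces to showing that this system has a unique solution $(f_1,f_2,f_5)\in\bbD^3$, modulo the harmless swap $f_1\leftrightarrow f_2$.

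I would start by using \eqref{f34} to eliminate $f_1$ and $f_2$: after fixing a labeling, $f_1=(f_5-f_4)/(1-\bar f_5 f_4)$ and $f_2=(f_5-f_3)/(1-\bar f_5 f_3)$, so both are explicit M\"obius images of $f_4$ and $f_3$ depending only on $f_5$. Substituting these into \eqref{f12345} yields two complex equations in the single complex unknown $f_5$. As shown in the paragraph immediately preceding Theorem \ref{pentfix}, evaluating the identity \eqref{3rel} at $z=f_5$ together with its reversed-polynomial counterpart produces, after use of \eqref{f34}, the Mirman relation $B_2(f_5;f_1,f_2)=f_3 f_4$ and the reality condition $(1-\bar f_1 f_5)(1-\bar f_2 f_5)\in\bbR$. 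These necessary consequences of \eqref{f34} and \eqref{f12345} repackage the information in a form that is easier to analyze.

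The main obstacle is to verify that this reduced system has exactly one solution $f_5\in\bbD$. I expect that once the reality condition is used to eliminate, the Mirman-type identity becomes a single scalar Blaschke-style equation in $f_5$ whose unique root in $\bbD$ can be isolated by Schur-type estimates (the same machinery that gave the explicit formula for $f_3$ in the $n=4$ case at the end of Section~\ref{sec:quadrilateral}). A more robust alternative is to invoke Theorem \ref{Shukla}: for every $(f_1,f_2)\in\bbD^2$ it produces a unique $A\in S_5$ whose numerical range is the Poncelet $6$-ellipse with foci $f_1,f_2$, and by Darboux (see Theorem \ref{ponc6}) the pentagram curve of this $A$ is an ellipse with foci $(f_3,f_4)$ depending continuously on $(f_1,f_2)$. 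The resulting pentagram-foci map from $\bbD^2$ (up to pair-swapping) to itself is proper --- eigenvalues of a cutoff CMV matrix approach $\bbT$ as its foci do, so $A$ leaves $S_5$ in the limit --- and locally injective by the OPUC analysis above, so a standard degree argument upgrades it to a global bijection, which is exactly the content of Theorem \ref{pentfix}.

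Once $f_5$ (and hence $f_1,f_2$) has been uniquely identified, I would assemble $\Phi_5(z)=\prod_{j=1}^{5}(z-f_j)$, apply the inverse Szeg\H{o} recursion to extract its Verblunsky coefficients, and form the associated cutoff CMV matrix $A=\mcg^{(5)}\in S_5$. By construction the five eigenvalues satisfy \eqref{f34} and \eqref{f12345}, so Theorem \ref{5by5thm} guarantees that $A$ has an elliptic numerical range whose pentagram curve is an ellipse with the prescribed foci $(f_3,f_4)$, completing the argument.
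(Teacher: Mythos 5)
There is a genuine gap at the heart of your argument: everything reduces, as you say, to showing that the system \eqref{f34}--\eqref{f12345} has exactly one solution $(f_1,f_2,f_5)\in\bbD^3$ (up to swapping $f_1\leftrightarrow f_2$) once $(f_3,f_4)$ is prescribed --- and you never prove this. Your first route ends with ``I expect that \dots the Mirman-type identity becomes a single scalar Blaschke-style equation in $f_5$ whose unique root in $\bbD$ can be isolated by Schur-type estimates''; no such equation is derived and no such estimate is given, and it is not at all clear that one exists (after substituting the M\"obius expressions for $f_1,f_2$ into \eqref{f12345} one gets two genuinely coupled non-holomorphic equations in $f_5$, and counting their roots in $\bbD$ is not a routine Rouch\'e/Schur exercise). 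Your second route is equally incomplete: the ``properness'' of the foci-to-pentagram-foci map is asserted with a one-line heuristic (it is not obvious that both pentagram foci escape to $\bbT$ when only one of $f_1,f_2$ does), the local injectivity is credited to ``the OPUC analysis above'' which was never carried out, and the degree argument on the symmetric square $\bbD^2/S_2$ is invoked without setting up any of its hypotheses. In short, both proposed mechanisms for the existence-and-uniqueness step are placeholders.

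For comparison, the paper sidesteps the algebra entirely with a geometric argument: take the unique Poncelet $3$-ellipse $E$ with foci $f_3,f_4$, pick any circumscribing triangle $T^{(\lambda)}$, use Lemma \ref{intersection} to produce the \emph{unique} partner triangle $T^{(\tau)}$ for which the three main diagonals of the resulting hexagon are concurrent, apply (the converse of) Brianchon's theorem to inscribe an ellipse $E'$ in that hexagon, and then identify the pentagram ellipse of the matrix attached to $E'$ with $E$ via the uniqueness statement of Lemma \ref{unique}. The two interpolation lemmas (\ref{unique} and \ref{intersection}) are exactly what replace the missing analytic work in your sketch; without them, or without an honest count of solutions to \eqref{f34}--\eqref{f12345}, your proposal does not establish either existence or uniqueness. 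The final assembly step you describe (building $\Phi_5$ and running the inverse Szeg\H{o} recursion) is fine, but it rests entirely on the unproven core.
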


Our proof of Theorem \ref{pentfix} requires the following two lemmas.

\begin{lemma}\label{unique}
Given two triangles inscribed in $\partial\bbD$ with interlacing vertices, there is a unique ellipse that is inscribed in both of them.
\end{lemma}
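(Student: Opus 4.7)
My plan is to treat uniqueness and existence separately. For uniqueness, I will appeal to the classical projective fact that a conic tangent to five lines in general position is uniquely determined: any two ellipses inscribed in both $T_1$ and $T_2$ would share all six tangent lines (the six sides of $T_1 \cup T_2$), and in particular any five of them, which forces them to coincide.

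For existence, write $T_1 = P_1P_2P_3$ and $T_2 = Q_1Q_2Q_3$ with vertices interlacing cyclically on $\partial\bbD$ as $P_1, Q_1, P_2, Q_2, P_3, Q_3$. The plan is to produce an inellipse $E$ of $T_1$ that is also tangent to the chords $Q_1Q_2$ and $Q_2Q_3$; the third tangency to $Q_3Q_1$ will then come for free from Poncelet closure for $n=3$. Specifically, since $E$ is an inellipse of $T_1$ and $T_1$ is inscribed in $\partial\bbD$, the ellipse $E$ is a Poncelet $3$-ellipse for $\partial\bbD$. Drawing the Poncelet triangle through $Q_1$, the two tangent chords from $Q_1$ to $E$ are $Q_1Q_2$ and one other, say $Q_1R$; by closure, $Q_2R$ must be tangent to $E$. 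But $Q_2Q_3$ is tangent to $E$ by assumption and is distinct from $Q_2Q_1$, so it has to coincide with $Q_2R$, forcing $R = Q_3$. Closure then yields that $Q_3Q_1$ is tangent to $E$ as well.

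The main obstacle will be exhibiting such an inellipse $E$. The inellipses of $T_1$ form a two-real-parameter family, and two tangency conditions should generically cut this family down to a discrete set. I would establish non-emptiness by a topological degree argument on the continuous map $E \mapsto (A(E), B(E)) \in \partial\bbD \times \partial\bbD$, where $A(E), B(E)$ are the two non-$Q_1$ vertices of the Poncelet triangle of $E$ started at $Q_1$; analyzing the degenerations as $E$ approaches the boundary of the inellipse family should yield degree one, and hence the target $(Q_2, Q_3)$ is hit. A cleaner alternative is to take the unique conic tangent to the five lines $P_1P_2, P_2P_3, P_3P_1, Q_1Q_2, Q_2Q_3$ and verify that the interlacing condition forces it to be a bounded ellipse inscribed in $T_1$ rather than a hyperbola or an escribed conic; the interlacing is what guarantees that the chords $Q_1Q_2$ and $Q_2Q_3$ cross through the interior of $T_1$, providing the convex setup needed.
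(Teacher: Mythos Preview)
Your approach is genuinely different from the paper's. The paper disposes of the entire lemma in one line by invoking Wendroff's Theorem for paraorthogonal polynomials: two POPUC of degree $3$ with interlacing zeros on $\bbT$ determine a unique degree-$2$ OPUC $\Phi_2$ (equivalently, a unique matrix in $S_2$), and the numerical range of the associated $2\times 2$ cutoff CMV matrix is precisely the desired inscribed ellipse. This gives existence and uniqueness simultaneously, with no projective geometry and no Poncelet closure argument needed.

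Your uniqueness argument via the dual statement ``five tangent lines determine a conic'' is classical and sound (a minor check that no three of the six sides are concurrent is routine here). Your Poncelet closure step for existence is also correct and elegant: once $E$ is an inellipse of $T_1$ tangent to two sides of $T_2$, the third tangency is forced.

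The genuine gap is in producing that $E$. Both of your proposed routes are only sketched. The degree argument on $E\mapsto (A(E),B(E))$ needs a careful description of the parameter space of inellipses of $T_1$ and a boundary analysis you have not carried out. The ``cleaner alternative'' --- take the unique conic tangent to the five lines and check it is a real ellipse lying inside $T_1$ with the correct tangency segments --- hides exactly the nontrivial verification: a priori that conic could be a hyperbola, a degenerate point-pair, or an escribed conic, and ruling those out from the interlacing hypothesis is the actual work. You acknowledge this but do not do it. By contrast, the Wendroff route bypasses the issue entirely, since the output is automatically the numerical range of a matrix in $S_2$, hence an elliptical disk inside $\bbD$ with the $3$-Poncelet property.
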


\begin{proof}
Any such ellipse would be a Poncelet $3$-ellipse.  The lemma is a consequence of Wendroff's Theorem for POPUC, which includes a uniqueness statement (see \cite[Theorem 3.1]{GauWu04}, \cite[Theorem 8]{MFSS}, or \cite{AGT}).
\end{proof}

\begin{lemma}\label{intersection}
Let $\{\Phi_3^{(\lambda)}\}_{\lambda\in\partial\bbD}$ be the collection of degree $3$ POPUC for the same degree $2$ OPUC.  Label the zeros of $\Phi_3^{(\lambda)}$ as $\{z_j^{(\lambda)}\}_{j=1}^3$.  For each $\lambda\in\partial\bbD$ there exists a unique $\tau\in\partial\bbD$ so that the line segments joining $z_j^{(\lambda)}$ to $z_j^{(\tau)}$ all meet in a single point independent of $j$ (this assumes an appropriate labeling of the zeros $\{z_j^{(\lambda)}\}_{j=1}^3$).
\end{lemma}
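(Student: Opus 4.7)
The plan is to translate the geometric concurrency condition into an algebraic one by means of the M\"obius involution associated with the candidate common point. For $p\in\bbD$ let $\sigma_p(z):=(p-z)/(1-\bar p z)$; this is the unique M\"obius involution of $\bbD$ swapping $0$ and $p$, and its restriction to $\partial\bbD$ exchanges the two endpoints of every chord of $\bbD$ through $p$. Therefore the three segments $[z_j^{(\lambda)},z_j^{(\tau)}]$ pass through a common point $p\in\bbD$ if and only if $\sigma_p$ bijects the zero set of $\Phi_3^{(\lambda)}$ onto the zero set of $\Phi_3^{(\tau)}$, with the ``appropriate labeling'' given automatically by $z_j^{(\tau)}:=\sigma_p(z_j^{(\lambda)})$. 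Clearing denominators converts this bijection into the cubic polynomial identity
\[
(1-\bar p z)^3\,\Phi_3^{(\tau)}(\sigma_p(z))=\kappa\,\Phi_3^{(\lambda)}(z)
\]
for some $\kappa\in\bbC^{*}$.

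The next step is to expand both sides using the POPUC formula $\Phi_3^{(\mu)}(z)=z\Phi_2(z)-\bar\mu\,\Phi_2^{*}(z)$ and match the four coefficients. The $z^3$ and $z^0$ terms give $\kappa=\overline{\tau\,\Phi_3^{(\tau)}(p)}$ and $\Phi_3^{(\tau)}(p)=-\bar\lambda\,\kappa$, eliminating $\kappa$ and coupling $(p,\tau,\lambda)$ through one complex relation, while matching the $z^1$ and $z^2$ terms yields two further conditions. I would first verify the centered case $\Phi_2(z)=z^2$, in which the Poncelet triangles are equilateral: there the middle-coefficient equations reduce to $p^2=\bar\tau\bar p$ and $p=\bar\tau\bar p^2$, whose only common solution with $p\in\bbD$ is $p=0$, after which the extremal equations force $\tau=-\lambda$ and $\kappa=-1$. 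This matches the obvious picture in which each $[z_j^{(\lambda)},-z_j^{(\lambda)}]$ is a diameter and all three diameters pass through the origin.

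For general $\Phi_2$ I would establish existence by a continuity/deformation argument from the centered case, together with an implicit-function-theorem check of the non-degeneracy of the linearization at $(p,\tau)=(0,-\lambda)$. I expect the main obstacle to be \emph{uniqueness} of $\tau$: the four coefficient-matching equations in the three real parameters $(p,\tau)\in\bbD\times\partial\bbD$ form an a priori overdetermined system, so one must exploit the Szeg\H{o}/Blaschke structure of $\Phi_3^{(\tau)}(\sigma_p(z))$ to show that the system is self-consistent and admits exactly one interior solution. An alternative, more geometric, route to uniqueness goes through the converse of Brianchon's theorem: the concurrency of the three segments is equivalent to the hexagon obtained by interlacing $\{z_j^{(\lambda)},z_j^{(\tau)}\}$ in the appropriate cyclic order on $\partial\bbD$ circumscribing some conic, and uniqueness of $\tau$ should follow by parameter-counting within the resulting Poncelet $6$-family.
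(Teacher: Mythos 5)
Your algebraic setup is correct and genuinely different from the paper's argument: the observation that concurrency of the three chords at $p\in\bbD$ is equivalent to the involution $\sigma_p(z)=(p-z)/(1-\bar p z)$ mapping the zero set of $\Phi_3^{(\lambda)}$ onto that of $\Phi_3^{(\tau)}$ is right, the resulting cubic identity $(1-\bar p z)^3\,\Phi_3^{(\tau)}(\sigma_p(z))=\kappa\,\Phi_3^{(\lambda)}(z)$ is the correct algebraic encoding, and your extreme-coefficient relations and the centered case $\Phi_2(z)=z^2$ check out. However, the proof stops exactly where the lemma begins. For general $\Phi_2$ you offer only a plan: existence ``by a continuity/deformation argument from the centered case, together with an implicit-function-theorem check,'' and uniqueness by showing the overdetermined system ``is self-consistent and admits exactly one interior solution'' by ``exploiting the Szeg\H{o}/Blaschke structure'' or by ``parameter-counting.'' None of these steps is carried out, and they are not routine. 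The IFT at $(p,\tau)=(0,-\lambda)$ would give at best local existence and uniqueness near the centered case; to propagate over all of $\bbD^2$ of Verblunsky parameters you would additionally need a properness (no escape to $\partial\bbD$, no bifurcation) or degree-theoretic argument, which is absent. And observing that four complex coefficient equations in three real unknowns are a priori inconsistent, and then saying one must show they are consistent with a unique solution, is a restatement of the lemma rather than a proof of it. The Brianchon alternative has the same status: the equivalence of concurrency with the hexagon circumscribing a conic is fine, but the ``parameter-counting within the resulting Poncelet $6$-family'' that is supposed to deliver uniqueness is not performed (and would need care to avoid leaning on results that the paper derives from this very lemma, such as Theorem \ref{pentfix}).

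For comparison, the paper's proof is elementary and completes the existence step: it uses the interlacing of the zeros of $\Phi_3^{(\tau)}$ with those of $\Phi_3^{(\lambda)}$ to label the zeros, tracks the two intersection points $L_1^{(\tau)}\cap L_2^{(\tau)}$ and $L_1^{(\tau)}\cap L_3^{(\tau)}$ as normalized positions $d_2(\tau),d_3(\tau)$ along the segment $L_1^{(\tau)}$, and applies the Intermediate Value Theorem to find $\tau$ with $d_2(\tau)=d_3(\tau)$, with uniqueness following from monotonicity. If you want to salvage your approach, the most promising route is to convert your coefficient system into a single real-analytic equation for $\tau$ (eliminating $p$ and $\kappa$ via your two extreme-coefficient relations) and then prove it has exactly one root on $\partial\bbD$; as written, the existence and uniqueness claims are asserted, not established.
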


\begin{proof}
For each $\tau$, let $z_1^{(\tau)}$ be the zero that lies between $z_2^{(\lambda)}$ and $z_3^{(\lambda)}$, let $z_2^{(\tau)}$ be the zero that lies between $z_3^{(\lambda)}$ and $z_1^{(\lambda)}$, and let $z_3^{(\tau)}$ be the zero that lies between $z_1^{(\lambda)}$ and $z_2^{(\lambda)}$.  Let $L_j^{(\tau)}$ be the line segment that joins $z_j^{(\lambda)}$ to $z_j^{(\tau)}$ for $j=1,2,3$.

Given $\{z_j^{(\lambda)}\}_{j=1}^3$, start with $\tau=\lambda$ and move $\tau$ around $\bbT$ counterclockwise.  As this happens, consider $\eta_j(\tau):=L_1^{(\tau)}\cap L_j^{(\tau)}$ for $j=2,3$ and notice that both of these points are in $L_1^{(\tau)}$.  Define
\[
d_j(\tau)=\frac{|z_1^{(\lambda)}-\eta_j(\tau)|}{|L_1^{(\tau)}|},\qquad\qquad j=2,3.
\]
Initially, $d_3(\tau)$ is close to $0$ and $d_2(\tau)$ is close to $1$.  As $\tau$ nears the end of its trip around $\bbT$, it holds that $d_2(\tau)$ is close to $0$ and $d_3(\tau)$ is close to $1$.  Thus, by the Intermediate Value Theorem, there must be a value of $\tau$ such that $d_2(\tau)=d_3(\tau)$ as desired.  One can see by inspection that this choice of $\tau$ is unique.
\end{proof}

\textit{Proof of Theorem \ref{pentfix}}
Suppose $(f_3,f_4)\in\bbD^2$ are given.  Consider the Poncelet $3$-ellipse with foci at $f_3$ and $f_4$ (call it $E$).  Pick any triangle $T^{(\lambda)}$ that is inscribed in $\partial\bbD$ and circumscribed about $E$.  By Lemma \ref{intersection}, there exists a unique second such triangle $T^{(\tau)}$ such that the line segments joining opposite vertices meet in a single point.  By Brianchon's Theorem, there is an ellipse inscribed in the hexagon whose vertices are the vertices of $T^{(\tau)}$ and $T^{(\lambda)}$.  Call this ellipse $E'$ and suppose its foci are $f_1$ and $f_2$.

From what we already know, the ellipse $E'$ is the unique Poncelet $6$-ellipse with foci at $f_1$ and $f_2$, and Theorem \ref{Shukla} tells us that it is associated to a matrix in $S_5$.  For this matrix, the associated pentagram curve must be an ellipse and the associated Brianchon curve must be a single point.  This pentagram ellipse is a Poncelet $3$-ellipse and must be inscribed in the triangles $T^{(\lambda)}$ and $T^{(\tau)}$.  By Lemma \ref{unique}, that ellipse must be $E$.
\begin{flushright}$\square$\end{flushright}

Our next result is an analog of  Theorem \ref{pentfix} for the Brianchon curve.  Specifically, we will show that one can find $A \in S_5$ so that $\partial W(A)$ is an ellipse and the Brianchon curve is a single predetermined point.  The main difference between Theorem \ref{brianfix} and Theorem \ref{pentfix} is the lack of uniqueness.

\begin{theorem}\label{brianfix}
Given $f_5 \in \mathbb{D}$, there exists a $5\times5$ cutoff CMV matrix $A$ so that $\partial W(A)$ is an ellipse and the Brianchon curve of $A$ is the single point $f_5$.  Furthermore, the set of all possible $5\times5$ cutoff CMV matrices with this property is naturally parametrized by an open triangle.
\end{theorem}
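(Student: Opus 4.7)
The plan is to combine the OPUC characterizations of Theorems \ref{brianch6} and \ref{ponc6} with a direct geometric construction via the converse of Brianchon's theorem and Poncelet closure, and then to identify the 2-dimensional parameter space with an open triangle. First, by Theorem \ref{brianch6}(iii), a cutoff CMV matrix $\mcg^{(5)}$ has Brianchon curve equal to the single point $f_5$ if and only if
\[
\Phi_5(z) = \Phi_1(z;\bar f_5)\,\Phi_2(z;\bar f_5,\alpha_1)\,\Phi_2(z;\bar f_5,\gamma_1)
\]
for some $\alpha_1,\gamma_1\in\bbD$; by Theorems \ref{ponc6} and \ref{pent6}, $\partial W(\mcg^{(5)})$ is additionally an ellipse precisely when $\Phi_5$ also factors as $\Phi_2(z;\beta_0,\beta_1)\Phi_3(z;\beta_0,\beta_1,\beta_2)$. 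To produce such an $A$ from $f_5$, I pick any three distinct chords $L_1, L_2, L_3$ of $\bbD$ through $f_5$, set $\{p_i,p_{i+3}\}=L_i\cap\bbT$, and label them so that $p_1,p_2,\dots,p_6$ run counterclockwise on $\bbT$. The three main diagonals $p_ip_{i+3}$ of the hexagon $H=p_1p_2\cdots p_6$ concur at $f_5$ by construction, so the converse of Brianchon's theorem yields an inscribed conic $E$, which Poncelet closure for $n=6$ upgrades to a Poncelet 6-ellipse. Theorem \ref{Shukla} then produces the matrix $A\in S_5$ with $\partial W(A)=E$, whose Brianchon point is $f_5$ by construction.

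For the triangular parameterization, I fix a base vertex $p_1\in\bbT$ (this normalizes the 1-parameter Poncelet 6-rotational freedom of the family of hexagons and determines $p_4$ as the second intersection of $\bbT$ with the chord through $p_1$ and $f_5$). The hexagon is then determined by the angular positions of $p_2$ and $p_3$ on the open arc from $p_1$ to $p_4$, and we record the three arc-gap angles
\[
a_1=\arg p_2-\arg p_1,\quad a_2=\arg p_3-\arg p_2,\quad a_3=\arg p_4-\arg p_3,
\]
which satisfy $a_i>0$ and $a_1+a_2+a_3=\arg p_4-\arg p_1$, forming barycentric coordinates on an open 2-simplex $\Delta$ --- that is, an open triangle.

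The main obstacle is verifying that the assignment $\Delta\to\{A\in S_5:\partial W(A)\text{ is an ellipse with Brianchon point }f_5\}$ is a bijection. Well-definedness and surjectivity are immediate from the construction above together with the observation that every Poncelet 6-family of hexagons with Brianchon point $f_5$ contains a unique member whose first vertex (in the prescribed cyclic order) is $p_1$. Injectivity is the crucial technical step: distinct hexagons with common first vertex $p_1$ must yield distinct inscribed Poncelet 6-ellipses, a uniqueness statement of Wendroff type analogous to Lemma \ref{unique}; I expect to establish it through the POPUC framework applied to the 6-gon setting, using the non-degenerate dependence of the Brianchon conic on its circumscribing hexagon together with the fact that five tangent lines determine a conic uniquely.
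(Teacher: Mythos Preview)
Your proposal is correct and follows essentially the same route as the paper: construct the hexagon from three chords through $f_5$, invoke (the converse of) Brianchon's theorem to obtain the inscribed Poncelet $6$-ellipse, normalize by fixing one vertex on $\bbT$, and parametrize by the two remaining free vertices on the arc to the opposite vertex---the paper records this as $\{(x,y):0<x<y<t\}$, which is exactly your open $2$-simplex in different coordinates. The only place you make things harder than necessary is injectivity: no POPUC/Wendroff machinery is needed, since from the fixed vertex $p_1$ there are only two tangent lines to a given ellipse $E$, which forces the entire circumscribing hexagon with vertex $p_1$ to be uniquely determined by $E$.
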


\begin{proof}
Suppose $f_5 \in \mathbb{D}$ is given.  Consider the set of all lines passing through $f_5$.  Each line intersects $\mathbb{T}$ in two places.  One can choose three distinct lines, thus specifying 6 distinct points of $\mathbb{T}$, labeled cyclically as $v_j \text{ for } j=1,2,...,6$.  By Brianchon's Theorem, there is an ellipse inscribed in the hexagon whose vertices are $\{v_j\}_{j=1}^6$.  Call this ellipse $E$ and its foci $f_1 \text{ and }f_2$. By Theorem \ref{5by5thm}, $E$ is the boundary of $W(A)$ for some $A \in S_5$.  Thus, the Poncelet curve and pentagram curve of $A$ are ellipses and the Brianchon curve of $A$ is a single point, which we see must be $f_5$ as desired.

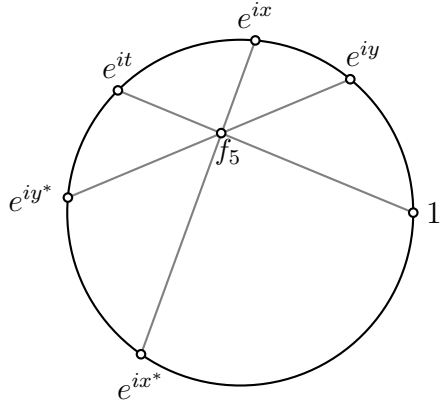
\begin{figure}[htb]
	\begin{center}
	 \begin{tikzpicture}[scale=2.3,cap=round,>=latex]
	 	% Preamble
	 	\tikzset{mypoints/.style={fill=white,draw=black,thick}}
	 	\def\ptsize{0.7pt}
	 	
	 	% draw the unit circle
	 	\draw[thick] (0cm,0cm) circle(1cm);
	 	
	 	% Define a pair of points and join
	 	\coordinate (A) at (1cm,0cm);
	 	\coordinate (B) at (135:1cm);
	\draw[gray,thick] (A) -- (B);
%	\filldraw[black] (135:1cm) circle(0.4pt);
	\draw (B) node[above=1pt] {\large $e^{it}$};
%	\filldraw[black] (1cm,0cm) circle(0.4pt);
	\draw (A) node[right=1pt] {\large $1$};
	
		% Define the second pair of points and join
	\coordinate (C) at (85:1cm);
	\coordinate (D) at (235:1cm);
	\draw[gray,thick] (C) -- (D);
%	\filldraw[black] (85:1cm) circle(0.4pt);
%	\filldraw[black] (235:1cm) circle(0.4pt);
	\draw (C) node[above=1pt] {\large $e^{ix}$};
	\draw (D) node[below=3pt] {\large $e^{ix^*}$};
	
	% Define the third pair of points and join
	\coordinate (E) at (175:1cm);
	\coordinate (F) at (50.5:1cm);
	\draw[gray,thick] (E) -- (F);
%	\filldraw[black] (175:1cm) circle(0.4pt);
%	\filldraw[black] (50.5:1cm) circle(0.4pt);
	\draw (F) node[right=5pt,above=1pt] {\large $e^{iy}$};
	\draw (E) node[left=1pt] {\large $e^{iy^*}$};
	
	% The intersection point
	\coordinate (G) at (-0.11cm,0.46cm);
	\node (a) at (-0.07cm,0.35cm) {$f_5$};
	
	% Draw the small circles
	\foreach \p in {A,B,C,D,E,F,G}
	\fill[mypoints] (\p) circle (\ptsize);
	
\end{tikzpicture}
	\caption{Given $f_5$ and the line through 1,$f_5$, varying $x,y$ such that $0<x<y<t$ parametrizes the set of all $A \in S_5$ with elliptic numerical range and Brianchon curve $f_5$.}
	\end{center}
	\label{fig:fixBrianchonpoint}
\end{figure}

Note that the above procedure yields a well-defined map from collections of distinct triples of lines through $f_5$ to matrices in $S_5$ whose numerical range is an elliptical disk and whose Brianchon point is $f_5$.  To see this, suppose one starts with three distinct lines through $f_5$.  This produces six points on the unit circle by the above procedure. Connecting alternate points forms two triangles that must be tangent to the pentagram ellipse of the matrix we seek.  By Lemma \ref{unique}, there is only one possible choice for such an ellipse, so its foci $f_3,f_4$ are a well-defined output.  By Theorem \ref{5by5thm} and equation \eqref{f34}, we can calculate the foci  $f_1, f_2$ of the Poncelet curve of this matrix.   Thus, the eigenvalues $\{f_j\}_{j=1}^5$ of the matrix we seek are a computable quantity from any three distinct lines through $f_5$.  Since the eigenvalues determine the matrix in $S_5$, this means the map is well-defined.

To make this map a bijection, we restrict it to triples of lines through $f_5$ for which one of them also passes through $1$.  Given any $A\in S_5$ with $W(A)$ an elliptical disk and Brianchon point $f_5$, there exists a hexagon that circumscribes $\partial W(A)$ for which $1$ is a vertex, so the restricted map is onto.  To show that it is injective, suppose $H$ is a hexagon inscribed in $\bbT$ with one vertex at $1$ and $E$ is an ellipse that is tangent to every edge of $H$.  From any given point on $\bbT$ (in particular, the point $1$), there are only two tangents to $E$ through that point.  This implies $H$ is the unique hexagon that includes $1$ and has the required tangency properties.  Thus, our restricted map uniquely determines the numerical range of the matrix and hence uniquely determines the matrix itself.

Suppose that the line through $1$ and $f_5$ also intersects $\bbT$ at $e^{it}$.  We have shown that the space of all $5\times 5$ CMV matrices with the desired property is naturally parametrized by $\{(x,y): 0<x<y<t\}$, which is an open triangle.
\end{proof}

  % % % % % % % % % % % % % % % % % % % % % % % % % % % % % % % % % % % % % % % % % % % % % % % % % % % % % % % % % % % % % % 
  \section{The pentagon case} \label{sec:pentagon}
  % % % % % % % % % % % % % % % % % % % % % % % % % % % % % % % % % % % % % % % % % % % % % % % % % % % % % % % % % % % % % % 

To find Poncelet $5$-ellipses, it will not be possible to consider compositions of Blaschke products as in the previous sections, which partially explains why this case has been studied less often in the literature.  Instead, we will revisit the Mirman system and prove a structure theorem about the set of possible solutions.  In this setting, the system of equations has multiple solutions and our next result describes their relative placement in the plane.
  
  \begin{theorem} \label{thm:pentagon}
  	Let $f_1,f_2\in \mathbb D$, $f_1 f_2\neq 0$ and set $\Phi_2(z)=(z-f_1)(z-f_2)$. The system
  	\begin{equation} \label{caseN=5-3}
  	\begin{split}
  	\frac{\Phi_2(z)}{\Phi_2^*(z)}  &= w f_{1}  , \\
  	\frac{\Phi_2(w)}{\Phi_2^*(w)}&=z f_2
  	\end{split}
  	\end{equation}
  	has exactly 5 distinct solution pairs  $(z,w)\in \bbC^2$:  four of them are in $\mathbb D^2$: $(0,f_2)$, $(f_1,0)$, $(z_1,w_1)$, $(z_2,w_2)$, and exactly one  solution $(z_3,w_3)$ satisfies $|z_3|>1$, $|w_3|>1$.
  	
  	Moreover, the points $z_1$, $z_2$ and $z_3$ are collinear, as are $w_1$, $w_2$ and $w_3$. More precisely,
  	\begin{enumerate}[i)]
  		\item It holds that
		$$
  		\frac{z_1-f_2}{w_1-f_1}=\frac{z_2-f_2}{w_2-f_1}=\frac{z_3-f_2}{w_3-f_1}=\frac{f_1\Phi_2^*(f_2)}{f_2\Phi_2^*(f_1)},
  		$$
  		\item The points $f_1$, $w_1$, $w_2$ and $w_3$ are collinear, i.e.
  		$$
  		\frac{w_i-f_1}{w_j-f_1}\in\bbR, \quad i, j \in \{1,2,3\}, \quad i\neq j. 
  		$$
  		and the points $f_2$, $z_1$, $z_2$ and $z_3$ are collinear, i.e.
  		$$
  		\frac{z_i-f_2}{z_j-f_2}\in\bbR, \quad i, j \in \{1,2,3\}, \quad i\neq j. 
  		$$
  		
  	\end{enumerate}
  \end{theorem}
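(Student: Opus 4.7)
The plan is to count solutions via Bezout and then to exhibit a factorization of the resultant that delivers the collinearity in (i) almost for free. Writing the system as $B_2(z)=w f_1$ (bidegree $(2,1)$ in $(z,w)$) and $B_2(w)=z f_2$ (bidegree $(1,2)$), Bezout in $\mathbb P^1\times\mathbb P^1$ predicts $2\cdot 2+1\cdot 1=5$ intersections counted with multiplicity; a short check at infinity (where the leading coefficients of the two equations cannot vanish simultaneously under $f_1 f_2\neq 0$) shows all five are in $\bbC^2$, and direct substitution verifies that $(0,f_2)$ and $(f_1,0)$ are among them. Eliminating $w$ via $w=\Phi_2(z)/(f_1\Phi_2^*(z))$ from the first equation and substituting into the second, then clearing denominators, produces a polynomial $P(z)$ of degree $5$. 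The linchpin identity is
\[
P(z)=-z(z-f_1)\,\tilde\Lambda(z),\qquad \tilde\Lambda(z):=f_2(z-f_2)\Phi_2^*(f_1)\Phi_2^*(z)-\Phi_2^*(f_2)\bigl[\Phi_2(z)-f_1^2\Phi_2^*(z)\bigr],
\]
which I plan to verify by matching coefficients (the values at $z=0,f_1,f_2$ and the leading coefficient all agree by computations that are short, so only a handful of scalar identities remain). The cubic $\tilde\Lambda$ is the natural object to single out: on the graph $w=\Phi_2(z)/(f_1\Phi_2^*(z))$ one has $\Phi_2(z)-f_1^2\Phi_2^*(z)=f_1\Phi_2^*(z)(w-f_1)$, so $\tilde\Lambda(z)=\Phi_2^*(z)\cdot\bigl[f_2\Phi_2^*(f_1)(z-f_2)-f_1\Phi_2^*(f_2)(w-f_1)\bigr]$.

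From this, (i) is essentially free: zeros of $\Phi_2^*$ lie outside $\overline{\bbD}$, and every solution satisfies $\Phi_2^*(z)\neq 0$ (otherwise $\Phi_2(z)=0$ and $z\in\{f_1,f_2\}$, a trivial case), so the three nontrivial roots $z_i$ of $\tilde\Lambda$ give $f_2\Phi_2^*(f_1)(z_i-f_2)=f_1\Phi_2^*(f_2)(w_i-f_1)$, which is (i). For (ii), (i) forces $w_i-f_1=c^{-1}(z_i-f_2)$ with the same nonzero $c=f_1\Phi_2^*(f_2)/(f_2\Phi_2^*(f_1))$, so the two collinearity statements are equivalent; it therefore suffices to show that the three roots of the cubic $\tilde\Lambda(f_2+u)$ in $u$ lie on a common line through $0$. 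Writing $\tilde\Lambda(f_2+u)=\sum_{j=0}^3 C_j u^j$, this is equivalent to the two real algebraic conditions $C_0 C_2/C_1^2\in\bbR$ and $C_0^2 C_3/C_1^3\in\bbR$, or equivalently to the existence of $\theta\in\bbR$ such that $\tilde\Lambda(f_2+t e^{i\theta})$ is a complex scalar multiple of a cubic in $\bbR[t]$. I plan to verify these identities by direct computation from the explicit formulas for $C_j$ in terms of $f_1,f_2$ and their conjugates.

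For the inside/outside partition, I use that $|B_2(\zeta)|<1\iff|\zeta|<1$ for the degree-$2$ Blaschke product $B_2$; combined with both equations this forces $|z|<1\iff|w|<1$ for any solution, so the five solutions split into $\bbD^2$ and $(\bbC\setminus\overline{\bbD})^2$. Theorem \ref{Shukla} supplies at least one nontrivial inner solution. The three $z_i$ are collinear with $f_2\in\bbD$ by (ii), so they lie on a line that meets $\bbD$ in an open chord and $\bbC\setminus\overline{\bbD}$ in two exterior rays; parametrizing that line so that $\tilde\Lambda$ becomes (up to a unit) a real cubic in a real variable $t$, a sign count at the two endpoints of the chord together with the existence of at least one root in the chord pins the partition as exactly two $z_i$ inside $\bbD$ and one outside. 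The main obstacle is the polynomial identity $P(z)=-z(z-f_1)\tilde\Lambda(z)$, which is the backbone of the argument and simultaneously yields the solution count and the first collinearity; the two real algebraic identities on the $C_j$ that carry (ii) are a close second in difficulty.
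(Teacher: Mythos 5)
Your elimination strategy and the factorization $P(z)=-z(z-f_1)\tilde\Lambda(z)$ are plausible and, modulo the coefficient check you defer, essentially equivalent to the paper's key computation (the paper parametrizes the line by $z(t),w(t)$ and shows the two restricted cubics in $t$ are proportional; your on-graph identity for $\tilde\Lambda$ encodes the same proportionality). Granting that identity, part (i) does follow as you say, and your dichotomy lemma that every solution satisfies $|z|<1\iff|w|<1$ (with no solutions on $\bbT$) is correct and a nice complement to the Bezout count, which is a genuinely different route from the paper's Rouch\'e argument for counting the four interior and one exterior solutions.

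The genuine gap is in part (ii). You claim that ``the three roots of $\tilde\Lambda(f_2+u)$ lie on a common line through $0$'' is \emph{equivalent} to $C_0C_2/C_1^2\in\bbR$ and $C_0^2C_3/C_1^3\in\bbR$, i.e.\ to the existence of $\theta$ making $\tilde\Lambda(f_2+te^{i\theta})$ a scalar multiple of a real cubic in $t$. That is only a necessary condition: a real cubic in $t$ may have one real root and a complex-conjugate pair, and in that configuration the three roots $u_i$ are not collinear through the origin (equivalently, $u_i/u_j\notin\bbR$ for the conjugate pair against the real root). So your two scalar identities, even once verified, prove only that the cubic is real up to rotation; they do not prove collinearity. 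This is precisely the hard step in the paper's proof: after showing the normalized $Y(t)$ has real coefficients, the paper imports geometric input --- the two matrices $A_1,A_2\in S_4$ whose Poncelet and pentagram ellipses have foci $\{f_1,f_2\}$, whose squared minor semiaxes $b_1^2,b_2^2$ are shown (via the quadratic form $q$ from \cite{HMFPS} and \cite{Mirman1}) to be two \emph{real} roots of $Y(t)$ --- and only then concludes that all three roots are real because a real cubic has one or three real roots. Your proposal has no substitute for producing that second real root. This gap also propagates to your inside/outside split: the ``two in the chord, one outside'' argument presupposes all three roots lie on the real line after rotation, and in addition the sign condition $p(t_-)p(t_+)>0$ at the chord endpoints is asserted but not derived from anything; the paper avoids both issues by settling the $4$-versus-$1$ count independently with Rouch\'e before touching collinearity.
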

\begin{figure}[htb]
		\label{fig:MirmanPentagons2}
	\begin{center}
		\begin{tikzpicture}[scale=2,cap=round,>=latex]
			% Preamble
			\tikzset{mypoints/.style={fill=white,draw=black,thick}}
			\def\ptsize{0.7pt}

			% draw the unit circle
			\draw[thick] (0cm,0cm) circle(1cm);
			
			% Define foci f1 and f2
			\def\fx{0.13} \def \fy{0.44} 
			\def\ffx{-0.43} \def \ffy{0} 
			\coordinate (F2) at (\ffx cm,\ffy cm);
			\coordinate (F1) at (\fx cm,\fy cm);
			\draw (F1) node[above=1pt] { $f_1$};
			
			\draw (F2) node[left=1pt] { $f_2$};
			
			% Define the second pair of points and join
			\def\wx{-0.42} \def \wy{0.34} 
			\def\wwx{0.37} \def \wwy{0.49} 
			\coordinate (W1) at (\wx cm,\wy cm);
			\coordinate (W2) at (\wwx cm, \wwy cm);
			\draw[gray,thick] (W1) -- (W2);
			%	\filldraw[black] (85:1cm) circle(0.4pt);
			%	\filldraw[black] (235:1cm) circle(0.4pt);
			\draw (W1) node[above=1pt] { $w_1$};
			\draw (W2) node[right=1pt] { $w_2$};
			
			% Define the third pair of points and join
			\def\zx{-0.14} \def \zy{0.57} 
			\def\zzx{-0.55} \def \zzy{-0.24} 
			\coordinate (Z1) at (\zx cm, \zy cm);
			\coordinate (Z2) at (\zzx cm,\zzy cm);
			\draw[gray,thick] (Z2) -- (Z1);
			%	\filldraw[black] (175:1cm) circle(0.4pt);
			%	\filldraw[black] (50.5:1cm) circle(0.4pt);
			\draw (Z1) node[left=1pt,above=2pt] { $z_1$};
			\draw (Z2) node[left=1pt] { $z_2$};
			
			% Define the fourth pair of points and join
			\coordinate (W3) at (-2.01cm, 0.06cm); 
			\coordinate (W3a) at (-1.22cm, 0.20cm); 
			\coordinate (W3b) at (-1.62cm, 0.12cm); 
			
			\draw[gray,thick] (W1) -- (W3a);
			\draw[gray,thick,dashed] (W3a) -- (W3b);
			\draw[gray,thick] (W3b) -- (W3);

		\coordinate (Z3) at (0.68cm, 2.19cm);
		\coordinate (Z3a) at (0.27cm, 1.38cm);
			\coordinate (Z3b) at (0.48cm, 1.78cm);

		\draw[gray,thick] (Z1) -- (Z3a);
			\draw[gray,thick,dashed] (Z3a) -- (Z3b);
		\draw[gray,thick] (Z3b) -- (Z3);

			\draw (Z3) node[right=5pt,above=1pt] { $z_3$};
			\draw (W3) node[left=1pt] { $w_3$};

			\draw[gray,thick,dashed,-] (F1) -- (F2);
			\draw[gray,thick,dashed,-] (Z1) -- (W1);
			\draw[gray,thick,dashed,-] (Z2) -- (W2);
	
			% Draw the small circles
			\foreach \p in {W1,W2,Z2,Z1,Z3,W3}
			\fill[mypoints] (\p) circle (\ptsize);
			
				\filldraw[black] (F1) circle(\ptsize);
			\filldraw[black] (F2) circle(\ptsize);
			
		\end{tikzpicture}
		\caption{Collinearity of the points $f_1$, $f_2$, $z_j$'s and $w_j$'s, as explained in Theorem~\ref{thm:pentagon}.}
	\end{center}
\end{figure}
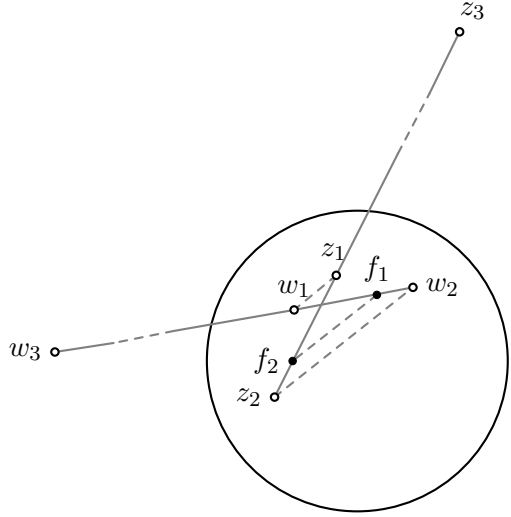

\begin{proof}
Let us denote
  	$$
  	g_1(z)=\frac{1}{f_1}\frac{(z-f_1)(z-f_2)}{(1-z\bar{f}_1)(1-z\bar{f}_2)}, \quad g_2(z)=\frac{1}{f_2}\frac{(z-f_1)(z-f_2)}{(1-z\bar{f}_1)(1-z\bar{f}_2)}, 
  	$$
  	so that \eqref{caseN=5-3} takes the form $g_1(z)=w$, $g_2(w)=z$. From here, we get that \eqref{caseN=5-3} implies that
  	\begin{equation} \label{caseN=5-4}
  	\begin{split}
  	g_2\circ g_1(z)  &= z  , \\
  	g_1\circ g_2(w)  &= w.
  	\end{split}
  	\end{equation}
  	Both $g_2\circ g_1$ and $g_1\circ g_2$ have 4 fixed points inside $\mathbb D$. Indeed, consider  $g_2\circ g_1$ (same analysis for  $g_1\circ g_2$).
  	Recall that a Blaschke product is analytic in $\mathbb D$, maps $\bbT\to \bbT$, $\bbD \to \bbD$ and exterior of $\bbD$ onto its exterior. So,
  	$$
  	|z|=1 \quad \Rightarrow \quad |g_1(z)|>1 \quad \Rightarrow \quad |g_2(g_1(z))|>1 .
  	$$
  	By Rouche's theorem, $g_2\circ g_1(z)-z$ and $g_2\circ g_1(z)$ have the same number of zeros in $\mathbb D$; it is straightforward to check that $g_2\circ g_1$ vanishes at 4 points in $\mathbb D$. 
  	
  	Finally, we claim that the statement of the proposition is equivalent to the just established fact that both $g_2\circ g_1$ and $g_1\circ g_2$ have 4 fixed points inside $\mathbb D$.
  	
  	Indeed, the 4 pairs of solutions of \eqref{caseN=5-3} satisfy \eqref{caseN=5-4}. Reciprocally, let $z$ be a fixed point of $g_2\circ g_1$ and denote $\tau = g_1(z)$. Then $g_2(\tau)=g_2(g_1(z))=z$, and in consequence, $g_1(g_2(\tau))=g_1(z)=\tau$, meaning that $\tau = g_1(z)$ is a fixed point of $g_1\circ g_2$. This shows that the fixed points $z$ and $w$ of $g_2\circ g_1$ and $g_1\circ g_2$ can be paired $(z,w)$ in such a way that \eqref{caseN=5-3} holds.
  	
  	Now we prove the statement about the remaining solution, this time in $(\bbC\setminus \overline{\mathbb D})^2$. The identity
  	\begin{equation} \label{conjug}
  	\overline{\frac{(1/\bar{z}-f_1)(1/\bar{z}-f_2)}{(1-\bar{f}_1/\bar{z})(1-\bar{f}_2/\bar{z})}}=\frac{1}{\frac{(z-f_1)(z-f_2)}{(1-z\bar{f}_1)(1-z\bar{f}_2)}}
  	\end{equation}
  	allows us to reduce the analysis of \eqref{caseN=5-3} in $(\bbC\setminus \overline{\mathbb D})^2$ to the equivalent system
  	\begin{equation} \label{caseN=5outside}
  	\begin{split}
  	\frac{(z-f_1)(z-f_2)}{(1-z\bar{f}_1)(1-z\bar{f}_2)}  &= \frac{w}{\overline{f_{1}}}   , \\
  	\frac{(w-f_1)(w-f_2)}{(1-w\bar{f}_1)(1-w\bar{f}_2)}&=\frac{z}{\overline{f_{2}}} 
  	\end{split}
  	\end{equation}
  	for $(z,w)\in \mathbb D^2$ (we return to the actual solutions outside by the mapping $z \mapsto 1/\overline z$, $w \mapsto 1/\overline w$). The advantage of working in $\mathbb D$ is that again we can use the fixed point argument and Rouche's Theorem. Indeed, as before, define 
  	$$
  	h_1(z)=\overline{f_1} \, \frac{(z-f_1)(z-f_2)}{(1-z\bar{f}_1)(1-z\bar{f}_2)}, \qquad\qquad h_2(z)=\overline{f_2} \, \frac{(z-f_1)(z-f_2)}{(1-z\bar{f}_1)(1-z\bar{f}_2)}, 
  	$$
  	and look for fixed points of $h_1\circ h_2$ and $h_2 \circ h_1$. This time
  	$$
  	|z|=1 \quad \Rightarrow \quad |h_1(z)|<1 \quad \Rightarrow \quad |h_2(h_1(z))|<1 ,
  	$$
  	and by Rouche's Theorem, $h_2\circ h_1(z)-z$ and $f(z)=z$ have the same number of zeros in $\mathbb D$, that is, exactly one.

To prove the statements about collinearity, define the linear functions
\begin{equation}\label{zwfunctions}
\begin{split}
z(t)&=\frac{f_2-f_1}{1-f_1\bar{f}_2}+\frac{4f_1}{(1-|f_1|^2)(1-f_1\bar{f}_2)}t\\
w(t)&=\frac{f_1-f_2}{1-f_2\bar{f}_1}+\frac{4f_2}{(1-|f_2|^2)(1-f_2\bar{f}_1)}t
\end{split}
\end{equation}
It is a straightforward calculation to verify that the two polynomials (in the variable $t$)
\[
\Phi_2(z(t))-w(t)f_1\Phi_2^*(z(t)),\qquad\qquad\Phi_2(w(t))-z(t)f_2\Phi_2^*(w(t))
\]
are scalar multiples of each other so any zero of one is a zero of the other.  Notice that both of these polynomials have degree $3$.  One can also check by hand that if $t_z$ is such that $z(t_z)=0$, then $w(t_z)\neq f_2$.  Similarly, if $t_w$ is such that $w(t_w)=0$, then $z(t_w)\neq f_1$.  Thus, the three zeros $\{t_j\}_{j=1}^3$ of
\begin{equation}\label{phisame}
Y(t):=\Phi_2(z(t))-w(t)f_1\Phi_2^*(z(t))
\end{equation}
will be such that $(z(t_j),w(t_j))$ is a solution to the system \eqref{caseN=5-3} other than $(0,f_2)$ and $(f_1,0)$.  Thus we can say $(z_j,w_j)=(z(t_j),w(t_j))$.  If we set $t_0=\frac{1}{4}(1-|f_1|^2)(1-|f_2|^2)$ and observe that $z(t_0)=f_2$ and $w(t_0)=f_1$, then a short calculation shows
\[
\frac{z_j-f_2}{w_j-f_1}=\frac{z(t_j)-z(t_0)}{w(t_j)-w(t_0)}=\frac{f_1\Phi_2^*(f_2)}{f_2\Phi_2^*(f_1)}.
\]
This proves claim (i) of the theorem.

To prove claim (ii), it suffices to show that each $t_j\in\bbR$.  To this end, a calculation reveals that if we divide the polynomial $Y(t)$ in \eqref{phisame} by its leading coefficient, then we obtain a monic degree $3$ polynomial with real coefficients.

Define
\[
q(x,y;t):=\left(y+\frac{x-f_1}{1-\overline{f_1}x}\right)\left(y+\frac{x-f_2}{1-\overline{f_2}x}\right)-\frac{4txy}{(1-\overline{f_1}x)(1-\overline{f_2}x)}
\]
There exist two distinguished matrices $A_1,A_2 \in S_4$ such that $A_1$ has numerical range bounded by an ellipse with foci at $\{f_1,f_2\}$ and the pentagram curve of $A_2$ is an ellipse with foci at $\{f_1,f_2\}$.  Let us denote the eigenvalues of $A_j$ by $\{f_1,f_2,z_j,w_j\}$ for $j=1,2$.
Then from \cite[Section 5]{HMFPS} (and also \cite[Equations 29--32]{Mirman1}) we know that there exist positive real numbers $b_1$ and $b_2$ so that
\[
q(f_2,z_j;b_j^2)=q(w_j,f_1;b_j^2)=q(z_j,w_j;b_j^2)=0
\]
for $j=1,2$.  In fact, $b_j$ is the length of the minor semiaxis of the ellipse associated with $A_j$ whose foci are $\{f_1,f_2\}$ (see \cite{Mirman1}).

Using $q(f_2,z_j;b_j^2)=q(w_j,f_1;b_j^2)=0$ and the formulas \eqref{zwfunctions}, we find that $z_j=z(b_j^2)$ and $w_j=w(b_j^2)$.  A short calculation shows (recall $t_0=(1-|f_1|^2)(1-|f_2|^2)/4$)
\[
q(z(t),w(t);t)=\frac{Y(t)(t-t_0)}{C_{f_1,f_2} \Phi_2^*(z(t))},
\]
where $\displaystyle C_{f_1,f_2}= \frac{-f_1 t_0}{f_2}$.  Thus the zeros of $Y(t)$ are also zeros of $q(z(t),w(t);t)$.

If $t_0$ were a zero of $Y(t)$, then $z=z(t_0)$ and $w=w(t_0)$ would satisfy \eqref{caseN=5-3}.  However, we have seen that $z(t_0)=f_2$ and $w(t_0)=f_1$, giving $0=\Phi_2(f_2)=f_1^2$, which is nonzero by assumption, so $Y(t_0)\neq0$.

Thus the zeros of $Y(t)$ are zeros of $q(z(t),w(t);t)$ distinct from $t_0$.  We know that $q(z(t),w(t);t)$ has zeros at $t=b_1^2$  and $t=b_2^2$ so these must be zeros of $Y(t)$ as well.  This gives us two real zeros of $Y(t)$.  Our earlier observation implies that $Y(t)$ has either one or three real zeros, so all zeros of $Y(t)$ are real as desired.
\end{proof}

\begin{figure}[htb] 
	\begin{center}
		\begin{tabular}{c p{10mm}c}		
			\begin{overpic}[width=0.35\linewidth]{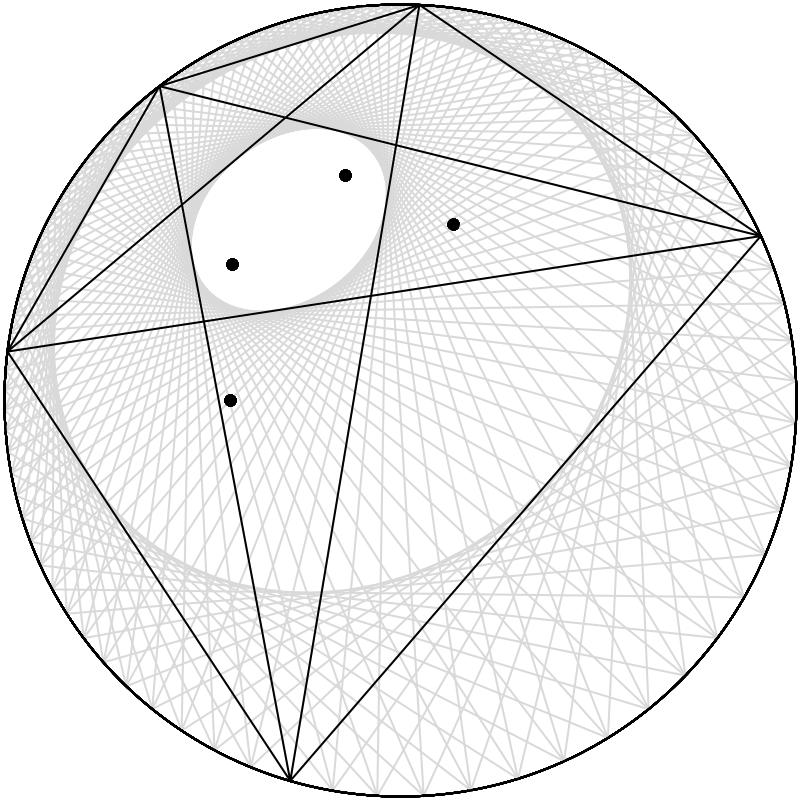}
				\put(49,110){ $w_1$}
				\put(58,125){ $z_1$}
				\put(47,80){ $f_2$}
				\put(83,110){ $f_1$}
			\end{overpic}
			& \ 	&
			\begin{overpic}[width=0.35\linewidth]{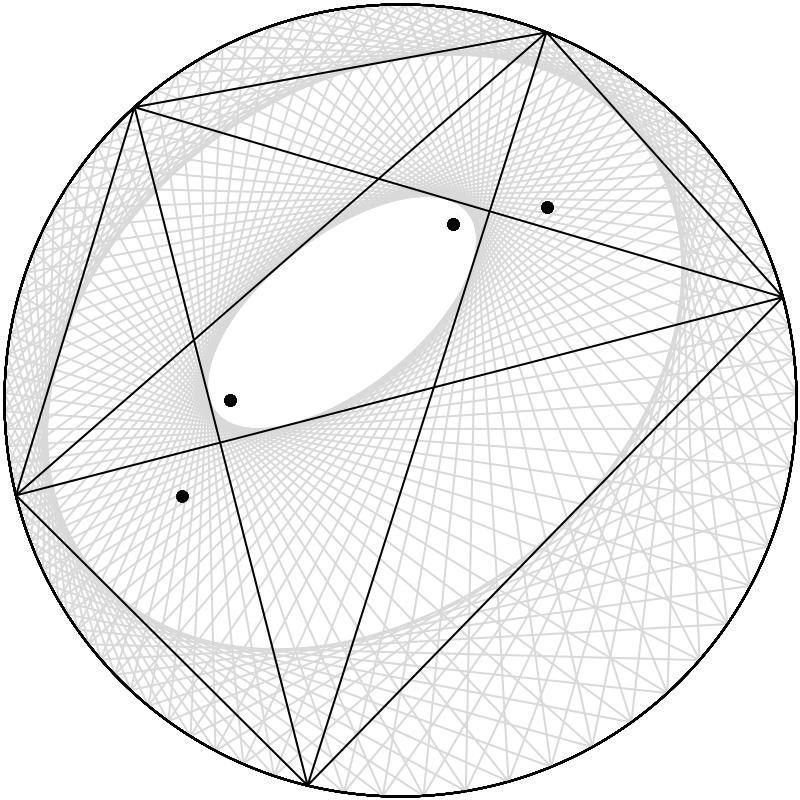}
				\put(110,125){ $w_2$}
				\put(36,55){ $z_2$}
				\put(47,80){ $f_2$}
				\put(83,110){ $f_1$}
			\end{overpic}
			
		\end{tabular}
	\end{center}
	\caption{Pairs $(z_1, w_1) $ and $(z_2, w_2)$ as foci of the pentagram and the Poncelet ellipses, respectively.}
	\label{fig:MirmanPentagons}
\end{figure}

The solutions to the Mirman system in this setting have a geometric interpretation.  Given $\{f_1,f_2\}\in\bbD^2$, we have just seen that there are three solution pairs $(z,w)$ to the Mirman system and we denote them by $(z_1,w_1)$, $(z_2,w_2)$, and $(z_3,w_3)$.  One of the solution pairs in $\bbD^2$ (say $(z_1,w_1)$) will be such that the $4\times 4$ cutoff CMV matrix with eigenvalues $\{f_1,f_2,z_1,w_1\}$ will have numerical range bounded by an ellipse with foci at $f_1$ and $f_2$ and the pentagram curve of this matrix will be an ellipse with foci at $z_1$ and $w_1$ (see Figure~\ref{fig:MirmanPentagons}, left).  For the other solution pair in $\bbD^2$ (say $(z_2,w_2)$), it will be true that the $4\times 4$ cutoff CMV matrix with eigenvalues $\{f_1,f_2,z_2,w_2\}$ has numerical range bounded by an ellipse with foci at $z_2$ and $w_2$ and the pentagram curve of this matrix will be an ellipse with foci at $f_1$ and $f_2$, see Figure~\ref{fig:MirmanPentagons}, right.  The geometric interpretation of the solution $(z_3,w_3)\in(\bbC\setminus\bar{\bbD})^2$ is not clear.

Notice that Theorem \ref{thm:pentagon} implies that each line $\ell_j$ passing through $\{z_j,w_j\}$ for $j=1,2,3$ is parallel to the line through $\{f_1,f_2\}$.  For $j=1,2$, one can obtain this same conclusion from the fact that the ellipses $C_1$ and $C_2$ corresponding to a matrix $A\in S_4$ are in the same package (see \cite{MS}).  The fact that this same conclusion applies to $\ell_3$ is a new result.

% % % % % % % % % % % % % % % % % % % % % % % % % % % % % % % % % % % % % % % % % % % % % % % % % % % % % % % % % 

\section*{Acknowledgments}

The second author was partially supported by Simons Foundation Collaboration Grants for Mathematicians (grant 710499) and by the Spanish Government--European Regional Development Fund (grant MTM2017-89941-P), Junta de Andaluc\'{\i}a (research group FQM-229 and Instituto Interuniversitario Carlos I de F\'{\i}sica Te\'orica y Computacional), and by the University of Almer\'{\i}a (Campus de Excelencia Internacional del Mar  CEIMAR).

The fourth author graciously acknowledges support from Simons Foundation Collaboration Grant 707882.

 % % % % % % % % % % % % % % % % % % % % % % % % % % % % % % % % % % % % % % % % % % % % % % % % % % % % % % % % 
 
%

\end{document}